\newtheorem{theorem}{Theorem}[section]
\newtheorem{lemma}{Lemma}[section]
\newtheorem{corollary}{Corollary}[section]
\newcommand{\bw}{\mathrm{bw}}
\title{Characterizations of perfectly clustering words}
\author{Mélodie Lapointe}
\address{Mélodie Lapointe,
D\'epartement de math\'ematiques et de statistique, Universit\'e de Moncton}
\email{melodie.lapointe@umoncton.ca}
\author{Christophe Reutenauer}
\address{Christophe Reutenauer,
D\'epartement de math\'ematiques, Universit\'e du Qu\'ebec \`a Montr\'eal}
\email{Reutenauer.Christophe@uqam.ca}
\keywords{perfectly clustering word, factorization, palindrome, Burrows-Wheeler transform, symmetric discrete interval exchange}
\date{\today}
\begin{document}

\maketitle

 \tableofcontents

\begin{abstract} 
    Perfectly clustering words are one of many possible generalizations of Christoffel words. In this article, we propose a factorization of a perfectly clustering word on a $n$ letters alphabet into a product of $n-1$ palindromes with a letter between each of them. This factorization allows us to generalize two combinatorial characterization of Christoffel words due to Pirillo (1999) and  de Luca and Mignosi (1994).
\end{abstract}

\section{Introduction}

Christoffel words and related families of binary words like standard 
words and central words have been extensively studied, resulting in many 
equivalent but seemingly unrelated definitions of these sets of words. 
Several generalizations have been studied in the literature, for larger 
alphabets, or in larger dimension \cite{CMR, J, LR, MR, Pa}. However, the 
equivalence between the different definitions of Christoffel words does not extend 
in general. 

In this article, we aim to study properties of perfectly clustering words, a generalisation of Christoffel words proposed in \cite{SP,FZ}. A word $w$ is perfectly clustering if its Burrows-Wheeler transform is a weakly decreasing word. In \cite{MRS}, Mantaci, Restivo and Sciortino showed that Christoffel words and their conjugates are exactly the binary words whose Burrows-Wheeler transform is a weakly decreasing word. Later, Ferenczi and Zamboni \cite{FZ} related these words with symmetric interval exchange transformations. Moreover, they also are primitive elements of the free group~\cite{L2021}, and product of two palindromes \cite{SP}. Thus perfecly clustering words share several properties of Christoffel words. 

Our main result (Theorem \ref{char1}) uses what we call the {\em special factorization} of perfectly clustering Lyndon word, in order two give two characterizations of these words. This result generalizes the following characterizations of Christoffel words: 

- a binary word $amb$ is a Christoffel word if and only if the word $amb$ is a product of two palindromes and $m$ is also a palindrome (de Luca and Mignosi \cite{dLM}).

- a binary word $amb$ is a Christoffel word if and only if the words $amb$ and $bma$ are conjugates (Pirillo \cite{P}). 
%In this process, we also obtain generalization of result on Christoffel word presented in~\cite{dLM}: 

The article is structured as follows. In Section 2, we recall known 
results and definitions. In Section 3, we introduced the special 
factorisation and characterize perfectly clustering word with it. In 
Section 4, we prove several lemmas that are necessary for our 
characterization. In Section 6, we present the proof of it. In Section 
5, we give some results on the structure of the Burrows-Wheeler matrix of a perfectly clustering word. 

The authors thank 
Antonio Restivo for useful mail exchanges.

This work was partially supported by NSERC, Canada. 

\section{Perfectly clustering words}\label{basic}

\subsection{Words}

Let $A = \{a_1,a_2,\dots, a_k\}$ be a totally ordered alphabet, where $a_1 < a_2 < \dots < a_k$.
Let $A^*$ denote the free monoid generated by $A$ and $F(A)$ denote the free group on $A$. An element in 
$F(A)$ which lies actually in $A^*$ is called {\it positive}. 

The length of $w = b_1\cdots b_n$ (with $b_i\in A$), denoted by $|w|$, is $n$.
The number of occurrences of a letter $a$ in $w$ is denoted by $|w|_a$.
The {\em commutative image} of $w$ is the integer vector $(|w|_{a_1},\dots,|w|_{a_k})$.
The function $Alph$ is defined by $Alph(w) = \{x \in A \mid |w|_x \geq 1\}$. 
%If $w \in A^*$, then $Alph(w) \subseteq A$.

A word $w$ is called {\em primitive} if it is not the power of another word, that is: for any word $z$ such that $w = z^n$, one has $n = 1$.
The \emph{conjugates} of a word $w$, as above, are the words $b_i \dots b_n b_1 \dots b_{i-1}$. In other words, two 
words $u,v\in A^*$ are {\it conjugate} if for some words $x,y\in A^*$, one has $u=xy,v=yx$. The {\em conjugation class} of a word is the set of its conjugates.
If a word $w$ is primitive, then it has exactly $n$ distinct conjugates.

A {\it palindrome} in the free group $F(A)$ is an element fixed by the 
unique anti-automorphism $g\mapsto \tilde g$ of $F(A)$ such that $\tilde 
a=a$ for any $a\in A$. We call $\tilde g$ the {\it reversal} of $g$.
It is well-known that a word $w$ in $A^*$ is conjugate to its reversal if and only if $w$ 
is a product of two palindromes (see \cite{BHNR}).

The {\em lexicographic} order is an extension of 
the total order on $A$ defined as follows: if $u,v \in A^*$, we have $u 
< v$ if either $u$ is a proper prefix of $v$, or $u = rxs$ and $v = ryt$ 
such that $x < y$ and $x,y \in A$ and $r,s,t \in A^*$.
A word $w$ is called a {\em Lyndon word} if it is a primitive word, and 
it is the minimal word in lexicographic order among its conjugates.

\subsection{Perfectly Clustering Words}\label{pcw}

Let $v$ be a primitive word of length $n$ on the alphabet $A$.
Let $v_1 < v_2 < \dots < v_n$ be its conjugates, lexicographically ordered.
Let $l_i$ be the last letter of the word $v_i$ for $1\leq i \leq n$.
The {\em Burrows-Wheeler transform} of the word $v$, denoted by $\bw(v)$, is the word $l_1 
\cdots l_n$.
Define the {\em Burrows-Wheeler matrix} of $v$ to be the matrix, with rows corresponding 
bijectively 
to the conjugates of $w$, lexicographically ordered, and with row-elements being
the letters of the
corresponding conjugate; then $bw(v)$ is the last column of this 
matrix. For example, the Burrows-Wheeler transform of the word $apartment$ is 
$\bw(apartment) =tpmteaanr$ (see Figure~\ref{fig:bwt}). 
It follows from the definition that two words $u$ and $v$ are conjugates if and only if 
$\bw(u) = \bw(v)$ (see~\cite[Proposition 1]{MRS}).
\begin{figure}
\setcounter{MaxMatrixCols}{20}
\begin{align*}
    \begin{matrix}
            a & p & a & r & t & m & e & n & 
            \smash{\color{black}\fbox{\color{black}\rule[-105pt]{0pt}{2pt}$t$}}\\
            a & r & t & m & e & n & t & a & p  \\
            e & n & t & a & p & a & r & t & m  \\
            m & e & n & t & a & p & a & r & t  \\
            n & t & a & p & a & r & t & m & e  \\
            p & a & r & t & m & e & n & t & a  \\
            r & t & m & e & n & t & a & p & a  \\
            t & a & p & a & r & t & m & e & n  \\
            t & m & e & n & t & a & p & a & r  \\
    \end{matrix}
\end{align*}
\caption{The Burrows-Wheeler matrix of the word $apartment$ sorted in lexicographic order. The last column is its Burrows-Wheeler transform}
\label{fig:bwt}
\end{figure}

Following \cite{FZ}, we say that a primitive word $v$ is {\em $\pi$-clustering} if $$\bw(v) = a_{\pi(1)}^{|w|_{a_{\pi(1)}}} \dots a_{\pi(r)}^{|w|_{a_{\pi(k)}}},$$ where $\pi$
is a permutation on $\{1,\dots, k\}$.
For example, the word $aluminium = a_1 a_3 a_6 a_4 a_2 a_5 a_2 a_6 a_4$ is $451623$-clustering, since $\bw(aluminium) = mmnauuiil$.
A word is {\em perfectly clustering} if the permutation $\pi$ is the symmetric permutation, i.e. $\pi(i) = k - i + 1$, for all $i \in \{1,\dots,k\}$.
It was proved by Sabrina Mantaci, Antonio Restivo and Marinella Sciortino that the perfectly clustering words on a two-letter alphabet are the Christoffel words 
and their conjugates (see~\cite[Theorem 9]{MRS}; see also \cite[Theorem 15.2.1]{R}).
If a primitive word is perfectly clustering, then all its conjugates are. Consequently, there is no loss of generality to study perfectly clustering Lyndon words.

For each letter 
$\ell\in A$, we define, 
following \cite{La}, 
two automorphisms $\lambda_\ell$ and $\rho_\ell$ of the free group $F(A)$ by: 
\begin{center}
    \begin{tabular}{ccc}
        $\lambda_{\ell}(a) = \begin{cases}
            
            a\ell^{-1}, & \text{ if } a < \ell ; \\
            a, & \text{ if } a = \ell ; \\
            \ell a, & \text{ if } a > \ell ; \\
        \end{cases}$
        & and &
        $\rho_{\ell}(a) = \begin{cases}
            
            a\ell, & \text{ if } a < \ell; \\
            a, & \text{ if } a = \ell ;\\
            \ell^{-1} a, & \text{ if } a > \ell ;\\
        \end{cases}$
    \end{tabular}
\end{center}
for any $a\in A$.

It was proved by the first author that for each perfectly clustering word $w\in A^*$ of length at least 3, there exists a shorter 
perfectly clustering word $u\in A^*$ and an automorphism $f=\lambda_\ell$ or $\rho_\ell$ such that $w=f(u)$ (see \cite[Chapter 4]{La}). One 
may also assume that either a) $Alph(w)=Alph(u)=A$, or b) $A=Alph(w)=Alph(u)\cup \ell$ and $\ell\notin Alph(u)$.

Since $\rho_\ell$ leaves invariant the number of letters in a word, except letter $\ell$, the fact that $w$ is longer than $u$ implies that $|w|_\ell>|u|_\ell$, that is, $$
|w|_\ell = |u|_\ell + \sum_{a < \ell}|u|_a-\sum_{a>\ell}|u|_a>|u|_\ell$$ (by the special form of $\rho_\ell$); therefore $
\sum_{a<\ell}|u|_a>\sum_{a>\ell}|u|_a$.

It is also shown in \cite[Lemma 4.6]{La} that for each word $u\in A^*$, $\lambda_\ell(u)$ (resp. $\rho_\ell(u)$) $\in A^*$ (that is, is positive) if and only if each 
letter $<\ell$ (resp. $>\ell$) in $w$ is followed (resp. preceded) in $w$ by a letter $\geq \ell$ (resp. $\leq \ell$).

\subsection{Symmetric discrete interval exchanges}\label{interval-exchange}

We follow \cite{FZ}. Let $n \geq 1$ be an integer. Let $(c_1,\ldots,c_k)$ be a {\it composition} of $n$, 
that is, an $k$-tuple of positive integers whose sum is $n$. We decompose in two ways the 
interval $[n]=\{1,2,\ldots,n\}$ into  
intervals: the intervals $I_1,\ldots,I_k$ (resp. $J_1,\ldots,J_k$) are defined by the 
condition that they are consecutive and that $\vert I_h\vert 
=c_h$ (resp. $\vert J_h\vert=c_{k+1-h}$). Denote by $S_n$ the group of permutations of $[n]$. We define the permutation $\sigma\in S_n$
by the condition that it sends increasingly each interval $I_h$ onto the interval $J_{k+1-h}$. We call such a permutation a {\em symmetric 
discrete interval exchange}\footnote{The word ``symmetric" refers to the fact that the 
intervals are exchanged according to the central symmetry to the set $
\{1,2,\ldots,k\}$, that is, the mapping $h\mapsto k+1-h$.}, and it is said to 
be {\em associated with the composition} $(c_1,\ldots,c_k)$.

It follows from this definition that for each $i<j$ in $[n]$ such that $i,j$ are in the 
same interval $I_h$, one has $\sigma(i)<\sigma(j)$.

Note that the restriction of $\sigma$ to $I_h$ is the translation by some integer $t_h$; 
since its image is $J_{k+1-h}$, one has $t_h=\max(J_{k+1-h})-\max(I_h)$. One has 
$\max(I_h)=c_1+\cdots+c_h$ and $\max(J_{k+1-h})=c_k+\cdots+c_h$, hence
$t_h=\sum_{i>h}c_i-\sum_{i<h}c_i$. Thus $\sigma$ is completely defined by:
\begin{equation}\label{local-translations}
\forall h=1,\ldots,k, \forall x\in I_h,\, \sigma(x)=x+t_h.
\end{equation}

We call $\sigma$ a {\it circular} symmetric discrete interval exchange if $\sigma$ is a circular permutation, that is, has only one 
cycle. 

A {\it word encoding} of a circular symmetric discrete interval exchange $\sigma$ is one of the words obtained by replacing in 
one of the cycle forms of $\sigma$ each number $i$ by the letter $a_j$ such that $i\in I_j$.  

\begin{theorem}\label{2conditions} The following conditions are equivalent, for a primitive word $w$ on a totally ordered alphabet:

(i) $w$ is primitive perfectly clustering word;

(ii) $w$ is a word encoding of some circular symmetric interval exchange.
\end{theorem}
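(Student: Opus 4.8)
The plan is to establish the equivalence by directly translating between the combinatorial data of the Burrows-Wheeler matrix and the local-translation description of a symmetric discrete interval exchange given in equation~\eqref{local-translations}. The key observation is that both objects are governed by the same numerical data: the composition $(c_1,\ldots,c_k)$ where $c_h = |w|_{a_h}$ records how many times each letter occurs, and this composition determines the intervals $I_h$ and $J_h$ on one side and the block structure of the Burrows-Wheeler matrix on the other. So I would set $c_h = |w|_{a_h}$ and work to match the cyclic structure induced by reading $w$ against the cycle structure of the associated permutation $\sigma$.

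**First I would** make precise the bijection between rows of the Burrows-Wheeler matrix and the integers in $[n]$. Since $w$ is perfectly clustering (assuming (i)), its transform is $\bw(w) = a_k^{c_k} a_{k-1}^{c_{k-1}} \cdots a_1^{c_1}$, the symmetric arrangement. The rows of the matrix, read in lexicographic order, are partitioned into consecutive blocks according to their first letter: the first $c_1$ rows begin with $a_1$, the next $c_2$ with $a_2$, and so on; this partition is exactly the interval decomposition $I_1,\ldots,I_k$. Dually, the last column being $\bw(w)$ means the rows are partitioned according to their last letter into blocks of sizes $c_k, c_{k-1},\ldots,c_1$, which is exactly the decomposition $J_1,\ldots,J_k$ (the block of rows ending in $a_h$ has size $c_h$ and sits in position $J_{k+1-h}$ by the symmetric clustering). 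The permutation $\sigma$ that sends each row to the row obtained by cyclically shifting one position to the left is then a circular permutation (because $w$ is primitive and a single cycle encodes the conjugates), and I would verify that this $\sigma$ is precisely the symmetric discrete interval exchange associated with $(c_1,\ldots,c_k)$.

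**The crux of the argument** is checking that the cyclic-shift permutation respects the local-translation formula $\sigma(x) = x + t_h$ for $x \in I_h$ with $t_h = \sum_{i>h}c_i - \sum_{i<h}c_i$, and that it maps $I_h$ increasingly onto $J_{k+1-h}$. The increasing-on-each-interval property follows because two conjugates sharing the same first letter $a_h$ are ordered lexicographically by their suffixes starting from the second position, and left-shifting preserves this order. The fact that the image is exactly $J_{k+1-h}$ is where the perfect (symmetric) clustering is essential: a conjugate beginning with $a_h$, after a left shift, becomes a conjugate ending in $a_h$, and the block of conjugates ending in $a_h$ is $J_{k+1-h}$ precisely because $\bw(w)$ lists the final letters in decreasing order. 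This step is the main obstacle, since it requires carefully confirming that the last-letter blocks line up with the $J$-intervals in the symmetric order rather than the natural order, and that the shift is an order-isomorphism between $I_h$ and $J_{k+1-h}$ as opposed to an order-reversing or otherwise scrambled map.

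**For the converse** (ii) $\Rightarrow$ (i), I would run the construction backwards: given a word encoding of a circular symmetric interval exchange $\sigma$ associated with some composition $(c_1,\ldots,c_k)$, I would recover a word $w$ by reading off the letters along the single cycle of $\sigma$ and show that its Burrows-Wheeler matrix has rows matching the interval structure, so that the last column is forced into the symmetric clustered form. The primitivity of $w$ corresponds to $\sigma$ being a single cycle, and the symmetric clustering of $\bw(w)$ falls out of the symmetric exchange condition $I_h \mapsto J_{k+1-h}$ by reversing the correspondence established above. I expect both directions to reduce to the same core identification of the cyclic-shift map with the interval exchange, so once that identification is nailed down the two implications are essentially mirror images of each other.
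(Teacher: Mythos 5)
Your forward direction (i) $\Rightarrow$ (ii) is essentially correct: the first column of the Burrows--Wheeler matrix gives the intervals $I_h$, perfect clustering makes the last-letter blocks coincide with the intervals $J_{k+1-h}$, the cyclic left shift is increasing from each $I_h$ onto the corresponding last-letter block (for distinct $u,u'$ of equal length, $a_hu<a_hu'$ iff $u<u'$ iff $ua_h<u'a_h$), and an increasing bijection between integer intervals of equal cardinality is a translation, so the shift is exactly the exchange $\sigma$ of (\ref{local-translations}); primitivity makes it circular, and the encoding of the cycle started at the row of $w$ itself is $w$. Note, for calibration, that the paper does not prove this theorem at all: it cites Ferenczi--Zamboni \cite[Theorem 4]{FZ}, and only extracts and proves the one lemma from their argument that it needs later, namely Lemma \ref{order-conjugates}.

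That lemma is precisely what is missing from your converse (ii) $\Rightarrow$ (i), which you present as a ``mirror image'' of the forward direction; it is not. Given $\sigma$ on $[n]$ and the encodings $w_s$ ($s\in[n]$), one checks easily that $w_s$ ends with $a_h$ exactly when $s\in J_{k+1-h}$; but to conclude that the last column of the Burrows--Wheeler matrix is $a_k^{c_k}\cdots a_1^{c_1}$ you must know \emph{where each $w_s$ sits in the lexicographic order of the conjugates}, i.e.\ that $w_s$ is the $s$-th smallest conjugate. In the forward direction the identification of conjugates with $[n]$ is order-preserving by construction (rows are indexed in lex order); in the converse this order-compatibility is the whole point to be proved. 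What you get for free is only a bijection $\phi\colon s\mapsto(\text{row index of }w_s)$ that preserves each $I_h$ setwise and intertwines $\sigma$ with the shift, and the last-letter block for $a_h$ is then $\phi(J_{k+1-h})$; nothing in your setup forces $\phi(J_{k+1-h})=J_{k+1-h}$. The missing statement is exactly Lemma \ref{order-conjugates}: $w_r<_{lex}w_s \Leftrightarrow r<s$, whose proof has genuine content --- one shows inductively that as long as the two encodings agree, the iterates $\sigma^i(r),\sigma^i(s)$ stay in a common interval and keep the order $\sigma^i(r)<\sigma^i(s)$; at the first disagreement the letters compare correctly because the intervals are consecutive; and circularity of $\sigma$ (equivalently, primitivity of $w$) is needed to rule out the case where the encodings never disagree. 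Your proposal contains the one-step version of this argument but neither the iteration, nor the termination step, nor the use of circularity; without them the converse does not close.
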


This result is due to Sébastien Ferenczi and Luca Zamboni \cite[Theorem 4]{FZ}. A brief history around this result is given in Section~\ref{sec:history}.

Denote by $m_h$ the minimum of the interval $I_h$. For later use, we prove two lemmas.
\begin{lemma}\label{mhmh+1} For any $h=1,\ldots,k-1$, one has $\sigma(m_h)=m_{h+1}$ if and only if $\sum_{i>h}c_i-\sum_{i\leq h}c_i=0$.
\end{lemma}

\begin{proof} By definition of the intervals $I_h$, one has $m_h=\sum_{i<h}c_i+1$.
Hence, $\sigma(m_h)=m_h+t_h=\sum_{i<h}c_i+1+\sum_{i>h}c_i-\sum_{i<h}c_i=1+\sum_{i>h}c_i$. This is equal to $m_{h+1}=\sum_{i\leq h}c_i+1$ if and only if the equality in the lemma holds.
\end{proof}

\begin{lemma} (\cite[Proof of Theorem 4]{FZ} )\label{order-conjugates}
Let $\sigma$ be a circular symmetric discrete interval exchange, with the notations above. For each cyclic representation $\gamma_r=(r,\sigma(r),\cdots,\sigma^{n-1}(r))$ of $\sigma$, denote by $w_r$ the word obtained by replacing in $\gamma_r$ each number $i$ by $a_j$ if $i\in I_j$. Then
$$w_r<_{lex}w_s \Leftrightarrow r<s.
$$
\end{lemma}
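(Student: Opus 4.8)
The plan is to prove the nontrivial implication $r<s \Rightarrow w_r <_{lex} w_s$ and then deduce the equivalence by trichotomy. First I would introduce the letter-labelling map $\phi\colon [n]\to A$ defined by $\phi(i)=a_j$ whenever $i\in I_j$, so that by definition $w_r=\phi(r)\,\phi(\sigma(r))\cdots\phi(\sigma^{n-1}(r))$. The two structural facts I will rely on are: since the intervals $I_1,\ldots,I_k$ are consecutive in increasing order, $\phi$ is weakly increasing, i.e. $i<i'$ implies $\phi(i)\le\phi(i')$, with equality exactly when $i,i'$ lie in the same interval; and the order-preservation property recorded just after the definition of $\sigma$, namely that $i<j$ with $i,j$ in the same $I_h$ forces $\sigma(i)<\sigma(j)$.

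The heart of the argument is a coupling of the two orbits. Writing $r_m=\sigma^m(r)$ and $s_m=\sigma^m(s)$, I would prove by induction on $m$ the following claim: if $r_0<s_0$ and $\phi(r_j)=\phi(s_j)$ for all $j<m$, then $r_m<s_m$. The base case is $r_0=r<s=s_0$. For the inductive step, the hypothesis $\phi(r_{m-1})=\phi(s_{m-1})$ means $r_{m-1}$ and $s_{m-1}$ lie in a common interval $I_h$; combined with $r_{m-1}<s_{m-1}$ from the previous step, the order-preservation property yields $\sigma(r_{m-1})<\sigma(s_{m-1})$, that is $r_m<s_m$.

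Next I would locate the first position $m^*$ at which the labels differ. Since $w$ is primitive its $n$ conjugates are pairwise distinct, and the $w_r$ are exactly these conjugates, distinct starting points in the single $n$-cycle giving distinct rotations (a coincidence would force a period properly dividing $n$, hence a proper power); therefore $w_r\ne w_s$ and such an $m^*$ exists. By the coupling claim $r_{m^*}<s_{m^*}$, and since the labels disagree there, weak monotonicity of $\phi$ upgrades $\phi(r_{m^*})\le\phi(s_{m^*})$ to $\phi(r_{m^*})<\phi(s_{m^*})$. As $w_r$ and $w_s$ agree on positions $0,\ldots,m^*-1$, this gives $w_r<_{lex}w_s$, establishing the forward implication. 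The reverse implication is then immediate by trichotomy: $w_r<_{lex}w_s$ excludes $r=s$ (which would give equal words) and excludes $r>s$ (which would give $w_s<_{lex}w_r$), leaving $r<s$.

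I expect the only delicate point to be the bookkeeping in the coupling induction: keeping straight that it is \emph{equality} of labels that licenses the use of the intra-interval order-preservation of $\sigma$, while the final \emph{strict} inequality of labels comes for free from weak monotonicity at the first disagreement. The existence of such a disagreement, which rests on primitivity of $w$, is the one place where a property external to the interval-exchange combinatorics enters the argument.
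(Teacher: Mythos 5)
Your coupling induction and your treatment of the first disagreement reproduce the paper's own argument almost verbatim: the paper's maximal common-prefix length $p$ is your $m^*$ (up to indexing), and its ``recursive'' use of intra-interval order preservation is exactly your coupling claim. The problem is the one step you flag yourself: the \emph{existence} of the disagreement position $m^*$. You justify it by primitivity of $w$, but the lemma has no word $w$ and no primitivity hypothesis --- its only assumption is that the permutation $\sigma$ is circular, and the words $w_r$ are defined from $\sigma$. That the $w_r$ are pairwise distinct (equivalently, that the encoding word is primitive) is not given; it is essentially part of the content of the implication (ii) $\Rightarrow$ (i) of Theorem \ref{2conditions}, which the paper cites from Ferenczi--Zamboni and whose proof rests on this very lemma. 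A circular permutation labelled by an arbitrary map can perfectly well produce coinciding rotations (label everything by one letter), so distinctness must come from the interval-exchange structure, and as written your argument assumes exactly what is at stake in the case where $w_r$ and $w_s$ agree at every position.

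The repair is cheap and is precisely what the paper does. If there were no disagreement, your coupling claim would apply at every position, yielding $\sigma^m(r)<\sigma^m(s)$ for all $m=0,\dots,n-1$. Since $\sigma$ is circular, the orbit $\{s,\sigma(s),\dots,\sigma^{n-1}(s)\}$ is all of $[n]$, so $\sigma^m(s)=1$ for some $m$, and then $\sigma^m(r)<1$ is impossible. This closes the gap using only the stated hypothesis, and it delivers the pairwise distinctness of the $w_r$ (hence primitivity of the encoding) as a byproduct of the lemma rather than as an input to it.
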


\begin{proof}
Suppose that $r<s$. Let $p\leq n$ be maximal such that the prefixes of length $p$ of $w_r$ and $w_s$ coincide. Then for each 
$i=1,\ldots,p$, the $i$-th letters of $w_r$ and $w_s$ are equal, hence $\sigma^{i-1}(r)$ and $\sigma^{i-1}(s)$ are in the same interval $I_j$. It follows recursively from the observation before (\ref{local-translations}) that $
\sigma^{p}(r)<\sigma^p(s)$. Suppose that $p<n$; since $p$ is maximal, the previous two numbers are not in the same interval, so that 
they are respectively in two intervals $I_h,I_j$ with $h<j$ (the intervals are successive); hence
the 
$(p+1)$-th letter of $w_r$ (obtained by replacing $\sigma^p(r)$ according to the rule in the statement)  is $a_h$, which is strictly smaller than $a_j$, which is the $(p+1)$-th letter of $w_s$ and therefore $w_r<_{lex} w_s$. Suppose now by contradiction that $p=n$; then we 
have $\sigma^{i-1}(r)<\sigma^{i-1}(s)$ for any $i=1,\ldots,n$, and this is not possible since $\sigma$ is circular so that some $\sigma^{i-1}(s)$ 
is equal to 1.
This ends the proof of the equivalence, since both orders are total.
\end{proof}

This lemma has several consequences: if we identify the set $\mathcal C$ of conjugates of $w$, lexicographically ordered, with $[n]$, under the unique order 
isomorphism between the two sets, then $\sigma$ is identified with the {\em conjugator} $C$, which is the mapping 
sending each nonempty word $au$ onto $ua$ ($a\in A,u\in A^*$). Moreover the interval $I_s$ is identified with the subset of $\mathcal C$ of words beginning by $a_s$. And $m_s$ is identified with the smallest conjugate of $w$ beginning by $a_s$.

\subsection{Brief history of Theorem \ref{2conditions}}\label{sec:history}

Continuous interval exchanges have a long history, beginning in 1966 by Osedelets \cite{O}. But discrete interval exchanges appeared more recently, in 2013, in 
the article of Ferenczi and Zamboni \cite{FZ}. They prove a more general theorem that Theorem \ref{2conditions}; the latter is obtained from theirs by taking $\pi$
to be the longest permutation in $S_k$, that is $\pi(i)=k+1-i$. However, for a two-letter alphabet, the result was given implicitly in \cite[Section 3 and Theorem 9]{MRS} (2002). Moreover, for a general alphabet, in \cite{SP} (2008) the function $\omega$ page 13 is a symmetric discrete interval exchange, as shows the  
formula there, similar to (\ref{local-translations}).
%; the authors show, among other results, that perfectly clustering words are product of two palindromes; and they characterize completely the perfectly clustering words on a three-letter alphabet:
It turns out that, for perfectly clustering words $w$, the intervals $I_h$ and $J_h$ defining the associated symmetric discrete interval exchange may be read 
directly on the first and last columns of the Burrows-Wheeler matrix of $w$. This permutation extends for any word, not necessarily perfectly clustering, and was 
observed in the original preprint of Burrows-Wheeler \cite[page 4]{BW} (1994). Finally, note that the Burrows-Wheeler transform is a particular case, discovered 
independently, of a bijection due to Gessel and the second author \cite{GR} (1993).

\section{A characterization of perfectly clustering words}

Let $w$ be a word on a totally alphabet $A$. We call {\it special factorization of} $w$ a factorization of the form
\begin{equation}\label{special}
w=a_1\pi_1a_2\pi_2\cdots \pi_{k-1}a_k,
\end{equation}
where $Alph(w)=\{a_1<a_2<\cdots<a_k\}$ and $\pi_1,\ldots,\pi_{k-1}\in A^*$. If $w$ satisfies this equality, we set $$W=a_k\pi_{k-1}\cdots \pi_2a_2\pi_1a_1.$$ We call this special factorization {\it palindromic} if each word $\pi_i$ is a palindrome; in this case $W=\tilde w$.

\begin{theorem}\label{char1} The following conditions are equivalent, for a primitive word $w$ on the totally ordered finite alphabet $A$:

(i) $w$ is a perfectly clustering Lyndon word;

(ii) $w$ is a product of two palindromes and $w$ has a palindromic special factorization;

(iii) $w$ has a special factorization (\ref{special}) such that $w$ is conjugate to $W$.
\end{theorem}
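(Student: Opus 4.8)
The plan is to prove the cyclic chain of implications (i) $\Rightarrow$ (ii) $\Rightarrow$ (iii) $\Rightarrow$ (i), using the correspondence with symmetric discrete interval exchanges (Theorem~\ref{2conditions}) as the structural backbone and exploiting the reversal/palindrome machinery already set up in the excerpt.

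\textbf{(i) $\Rightarrow$ (ii).} First I would establish the \emph{existence} of a palindromic special factorization for a perfectly clustering Lyndon word $w$. The natural route is induction on $|w|$ via the automorphisms $\lambda_\ell,\rho_\ell$: by \cite[Chapter 4]{La}, $w=f(u)$ for a shorter perfectly clustering word $u$ and $f\in\{\lambda_\ell,\rho_\ell\}$. The key step is to verify that these automorphisms interact well with reversal and with the letter-separated factorization structure of~\eqref{special}, so that applying $f$ to a palindromic special factorization of $u$ yields one for $w$. One must check that $f$ essentially commutes with the reversal $g\mapsto\tilde g$ (up to the obvious symmetry $\lambda_\ell\leftrightarrow\rho_\ell$), so that palindromes are sent to palindromes and the letters $a_1,\dots,a_k$ separating the blocks are preserved in order. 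For the ``product of two palindromes'' half, I would invoke the stated fact that $w$ is a product of two palindromes iff $w$ is conjugate to $\tilde w$; since a palindromic special factorization gives $W=\tilde w$ and the interval-exchange description makes the conjugacy of $w$ and $\tilde w$ transparent (reversing a word encoding reverses the cycle, which is another conjugate), this should follow. Alternatively I could cite \cite{SP} directly for the two-palindrome property and only prove the palindromic factorization inductively.

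\textbf{(ii) $\Rightarrow$ (iii).} This implication is essentially immediate from the definitions: if the special factorization is palindromic then $W=\tilde w$, and being a product of two palindromes means precisely that $w$ is conjugate to $\tilde w=W$. So (iii) holds with the same special factorization witnessing the conjugacy.

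\textbf{(iii) $\Rightarrow$ (i).} This is the substantive converse and where I expect the main obstacle. Starting from a special factorization $w=a_1\pi_1 a_2\cdots\pi_{k-1}a_k$ with $w$ conjugate to $W=a_k\pi_{k-1}\cdots\pi_1 a_1$, I must recover the circular symmetric interval-exchange structure and thereby perfect clustering (then minimality among conjugates gives the Lyndon property, or one reduces to the Lyndon conjugate). The plan is to read off a candidate composition $(c_1,\dots,c_k)$ from the commutative image of $w$, set up the associated $\sigma$, and show that the hypothesis ``$w\sim W$'' forces $\sigma$ to be circular with $w$ as a word encoding. The delicate point is translating the \emph{single} conjugacy relation $w\sim W$ into the \emph{symmetry} of the interval exchange: I would use Lemma~\ref{order-conjugates} and the identification of $\sigma$ with the conjugator $C$ to argue that the order-reversing effect of passing from $w$ to $W$ (the factorization is read backwards with the block letters in reverse) matches exactly the central symmetry $h\mapsto k+1-h$ underlying symmetric exchanges. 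Concretely, reversal sends the conjugate beginning with $a_s$ to one related to the block beginning with $a_{k+1-s}$, and the conjugacy $w\sim W$ pins down that this matching is consistent with a \emph{symmetric} rather than arbitrary exchange. Controlling this combinatorial bookkeeping — ensuring the blocks $\pi_i$ have the right lengths and that the induced permutation is genuinely circular (primitivity of $w$ is used here) — is the hard part, and I would likely need a separate lemma (presumably among those promised for Section~4) relating the reversal of a word encoding to the reversal/relabeling of the cycle of $\sigma$.
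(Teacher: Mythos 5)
Your implication (ii) $\Rightarrow$ (iii) is correct and is exactly the paper's argument, but both of the other implications have genuine gaps. In (i) $\Rightarrow$ (ii), your inductive skeleton (reduce via $w=f(u)$, $f\in\{\lambda_\ell,\rho_\ell\}$) is the same as the paper's, but your key claim --- that $f$ ``essentially commutes with reversal\dots so that palindromes are sent to palindromes and the letters $a_1,\dots,a_k$ separating the blocks are preserved'' --- is false as stated: $\rho_b(aa)=abab$ is not a palindrome. What is true (Lemma~\ref{pal}) is that $\ell\rho_\ell(g)$ (resp.\ $\lambda_\ell(g)\ell$) is a palindrome, so the blocks of the new factorization must be taken to be the shifted elements $\ell\rho(\pi_j)$ or $\rho(\pi_j)\ell^{-1}$, and in case b) (where $\ell\notin Alph(u)$) a new, empty block appears between $a_i$ and $a_{i+1}$. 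More seriously, you never address \emph{positivity}: $\rho(\pi_j)\ell^{-1}$ is a priori only an element of the free group, and showing it is a positive word requires knowing that $\pi_j$ is nonempty for the relevant indices $j$ (Lemmas~\ref{positive-pal} and~\ref{nonempty}). That nonemptiness is not a formality: in the paper it rests on Lemma~\ref{pi=1}, which is proved through the interval-exchange machinery (Lemmas~\ref{mhmh+1}, \ref{order-conjugates}, and~\ref{position}). None of this appears in your outline.

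For (iii) $\Rightarrow$ (i) you explicitly defer the entire content (``the hard part'') to an unproven lemma matching the conjugacy $w\sim W$ with the central symmetry of an interval exchange, so as a proof this direction is missing. Note that the paper does \emph{not} route this implication through Theorem~\ref{2conditions} at all; it argues directly on the Burrows--Wheeler matrix. The mechanism is: Lemma~\ref{<} gives, for every factorization $\pi_i=uv$, the explicit inequality between the conjugate $va_i\cdots\pi_1a_1a_k\pi_{k-1}\cdots a_{i+1}u$ of $W$ and the conjugate $va_{i+1}\cdots\pi_{k-1}a_ka_1\pi_1\cdots a_iu$ of $w$; since $w\sim W$, all these words form one conjugation class, and the map $\nu$ sending the first word to the second is injective and increasing, hence by Lemma~\ref{next} it is the next-element function, $W$ is the maximum, and $w$ is the minimum (Lemma~\ref{consecutive}) --- which also yields the Lyndon property for free (your fallback ``reduce to the Lyndon conjugate'' would not suffice, since (i) asserts that $w$ itself is Lyndon). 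Finally, $\nu$ preserves the last letter except at the $k-1$ conjugates with $u=1$, where the last letter drops from $a_{i+1}$ to $a_i$; hence the last column of the matrix is weakly decreasing and $w$ is perfectly clustering. If you want to salvage your interval-exchange plan, you would in any case need a statement equivalent to Lemma~\ref{<}/\ref{consecutive} to control the lexicographic order of the conjugates, so the paper's direct argument is both shorter and unavoidable in substance.
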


This theorem generalizes known results on perfectly clustering Lyndon words on two letters, which are the Christoffel words, according to \cite{MRS}. The equivalence of (i) and (ii) is a generalization of a result of Aldo de Luca and Filippo Mignosi, in \cite[Proposition 8]{dLM} (see also \cite[Theorem 2.2.4]{L} 
or \cite[Theorem 12.2.10]{R}). The equivalence of (i) and (iii) is a generalization of a result of Giuseppe Pirillo \cite{P} (see also \cite[Theorem 15.2.5]{R}).

The theorem will be proved in Section \ref{proof}.
\section{Several lemmas}

\begin{lemma}\label{pal} If $g\in F(A)$ is a palindrome, then so is $\lambda_\ell(g)\ell$ (resp. $\ell \rho_\ell(g))$.
\end{lemma}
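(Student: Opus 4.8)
The plan is to reduce the statement to a conjugation identity relating the reversal $g\mapsto\tilde g$ to the automorphism $\lambda_\ell$ (resp. $\rho_\ell$), and then to exploit the hypothesis $\tilde g=g$. Since the reversal is an anti-automorphism fixing every letter, I would first record how it interacts with $\lambda_\ell$ through an inner automorphism. Concretely, I claim that for every $h\in F(A)$ one has $\widetilde{\lambda_\ell(h)}=\ell^{-1}\lambda_\ell(\tilde h)\ell$, and symmetrically $\widetilde{\rho_\ell(h)}=\ell\,\rho_\ell(\tilde h)\,\ell^{-1}$.

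To prove each identity, I would observe that both sides, viewed as functions of $h$, are anti-homomorphisms of $F(A)$: on the left, $\lambda_\ell$ is an automorphism followed by the reversal anti-automorphism; on the right, the reversal is followed by the automorphisms $\lambda_\ell$ and the inner automorphism $x\mapsto\ell^{-1}x\ell$. Two anti-homomorphisms that agree on the generators are equal, so it suffices to check the identity on a single letter $a$, splitting into the three cases $a<\ell$, $a=\ell$, $a>\ell$ dictated by the definition of $\lambda_\ell$. This is a short direct computation using $\tilde a=a$ and $\widetilde{x^{-1}}=(\tilde x)^{-1}$; for instance when $a<\ell$ the left side gives $\widetilde{a\ell^{-1}}=\ell^{-1}a$ while the right side gives $\ell^{-1}(a\ell^{-1})\ell=\ell^{-1}a$, and the cases $a=\ell$ and $a>\ell$ are analogous.

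With the identity in hand, the lemma follows immediately. If $g$ is a palindrome, then $\tilde g=g$, so $\widetilde{\lambda_\ell(g)}=\ell^{-1}\lambda_\ell(g)\ell$, and therefore $\widetilde{\lambda_\ell(g)\ell}=\tilde\ell\,\widetilde{\lambda_\ell(g)}=\ell\cdot\ell^{-1}\lambda_\ell(g)\ell=\lambda_\ell(g)\ell$, which is exactly the assertion that $\lambda_\ell(g)\ell$ is a palindrome. The case of $\rho_\ell$ is identical, using the second identity: $\widetilde{\ell\rho_\ell(g)}=\widetilde{\rho_\ell(g)}\,\tilde\ell=\ell\rho_\ell(g)\ell^{-1}\cdot\ell=\ell\rho_\ell(g)$.

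The only delicate point, and the step I would be most careful about, is the anti-automorphism bookkeeping: the reversal reverses the order of products and sends $x^{-1}$ to $(\tilde x)^{-1}$, so the conjugating factor $\ell$ must appear on the correct side and with the correct exponent. Getting this direction right is precisely what makes the extra letter $\ell$ (appended on the right for $\lambda_\ell$, prepended on the left for $\rho_\ell$) convert $\lambda_\ell(g)$ or $\rho_\ell(g)$ --- which by the identity is only a \emph{conjugate} of a palindrome --- into an honest palindrome.
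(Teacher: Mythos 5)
Your proof is correct, but it takes a genuinely different route from the paper. The paper argues by induction on the structure of the palindrome: it writes a reduced palindrome $g$ of length at least $2$ as $g=xhx$ with $x\in A\cup A^{-1}$ and $h$ a shorter palindrome, then checks six cases ($x=a$ or $x=a^{-1}$, with $a<\ell$, $a=\ell$, $a>\ell$) to see that $\ell\rho_\ell(g)$ inherits palindromicity from $\ell\rho_\ell(h)$. You instead establish the global conjugation identities $\widetilde{\lambda_\ell(h)}=\ell^{-1}\lambda_\ell(\tilde h)\,\ell$ and $\widetilde{\rho_\ell(h)}=\ell\,\rho_\ell(\tilde h)\,\ell^{-1}$ for \emph{all} $h\in F(A)$, by noting that both sides are anti-homomorphisms of the free group and so are equal once they agree on the generators --- a three-case check per automorphism --- and then the lemma drops out in one line from $\tilde g=g$. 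Your identities check out (I verified all generator cases, and the anti-homomorphism bookkeeping is handled correctly), and your approach buys several things: it eliminates the induction, it avoids relying on the structural fact that every palindrome in $F(A)$ of length $\geq 2$ factors as $xhx$ with $h$ a palindrome (a fact the paper uses without proof), and the identity itself is a reusable statement of independent interest, explaining conceptually \emph{why} the extra letter $\ell$ is needed: $\lambda_\ell(g)$ and $\rho_\ell(g)$ are only conjugates of palindromes, and multiplying by $\ell$ on the appropriate side absorbs the conjugation. The paper's induction, by contrast, is entirely self-contained and elementary, at the cost of more case bookkeeping.
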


\begin{proof} We prove it for $\rho_\ell$, the proof for $\lambda_\ell$ being similar.
If $g\in\{1\}\cup A\cup A^{-1}$, this is easily verified. If $g$ is a palindrome in $F(A)$, not of the previous form, then $g=xhx$ 
with $x\in A\cup A^{-1}$, and $h$ a palindrome, shorter than $g$. By induction, $\ell\rho_\ell(h) $ is a palindrome, hence $\rho_\ell(h)
\ell^{-1}$ too. We have $\ell\rho_\ell(g)=\ell\rho_\ell(x)\rho_\ell(h)\rho_\ell(x)$. 

Suppose first that $x=a\in A$. If $a<\ell$, then $\ell\rho_\ell(g)=\ell a\ell\rho_\ell(h)a\ell$; if $a=\ell$, then $\ell\rho_\ell(g)=\ell\ell\rho_\ell(h)\ell$; if 
$a>\ell$, then $\ell\rho_\ell(g)=\ell\ell^{-1}a\rho_\ell(h)\ell^{-1}a=a\rho(h)\ell^{-1}a$. 
%In each case, $\ell\rho(g)$ is a palindrome.

Suppose now that $x=a^{-1}$ and $a\in A$. If $a<\ell$, then $\ell\rho_\ell(g)=\ell\ell^{-1}a^{-1}\rho_\ell(h)\ell^{-1}a^{-1}=a^{-1}\rho_\ell(h)
\ell^{-1}a^{-1}$. If 
$a=\ell$, then $\ell\rho_\ell(g)=\ell\ell^{-1}\rho_\ell(h)\ell^{-1}=\rho_\ell(h)\ell^{-1}$; if $a>\ell$, then $\ell\rho_\ell(g)=\ell 
a^{-1}\ell\rho_\ell(h)a^{-1}\ell$. 

In each 
case $\ell\rho_\ell(g)$ is a palindrome.
\end{proof}

\begin{lemma}\label{positive-pal} Let $u\in A^*,\, b,\ell\in A$, with $b>\ell$. Suppose that $\rho_\ell(u)$ is positive, and that $u$ has a factor $\pi b$, where $\pi$ is a 
nonempty palindrome. Then $\rho_\ell(\pi)\ell^{-1}$ is a positive palindrome.
\end{lemma}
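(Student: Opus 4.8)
The plan is to split the statement into two claims — that $\rho_\ell(\pi)\ell^{-1}$ is a palindrome, and that it is positive — since the first is immediate and the whole weight of the lemma lies in the second. For the palindrome property: as $\pi$ is a palindrome, Lemma~\ref{pal} gives that $\ell\rho_\ell(\pi)$ is a palindrome, and setting $P=\ell\rho_\ell(\pi)$ one has $\rho_\ell(\pi)\ell^{-1}=\ell^{-1}P\ell^{-1}$, whose reversal is $\ell^{-1}\tilde P\ell^{-1}=\ell^{-1}P\ell^{-1}$. This is exactly the inference already used inside the proof of Lemma~\ref{pal}, so I would simply invoke it.

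For positivity I would use the criterion recalled in Section~\ref{pcw} (\cite[Lemma 4.6]{La}): for any word $v\in A^*$, the element $\rho_\ell(v)$ is positive if and only if every letter $>\ell$ occurring in $v$ is preceded in $v$ by a letter $\le\ell$. First I would pin down the ends of $\pi$. Write $\pi=c_1\cdots c_m$. Since $\pi b$ is a factor of $u$ with $b>\ell$ and $\rho_\ell(u)$ is positive, the immediate predecessor $c_m$ of $b$ must satisfy $c_m\le\ell$; and because $\pi$ is a palindrome, $c_1=c_m\le\ell$ as well. I would then check that the standalone word $\pi b$ satisfies the same criterion: a letter $>\ell$ of $\pi b$ is either some $c_i$ with $i\ge2$ — whose predecessor inside $\pi b$ is $c_{i-1}$, equal to its predecessor in $u$ and hence $\le\ell$ — or the letter $b$ itself, preceded by $c_m\le\ell$; it cannot be $c_1$, which is $\le\ell$. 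Hence $\rho_\ell(\pi b)$ is positive.

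Finally, since $b>\ell$, we have $\rho_\ell(\pi b)=\rho_\ell(\pi)\ell^{-1}b$. The only negative letter that can appear in the image under $\rho_\ell$ of a positive word is $\ell^{-1}$, so the trailing $b$ (with $b\neq\ell$) cannot be cancelled and is the last letter of the positive word $\rho_\ell(\pi b)$; deleting it leaves the positive word $\rho_\ell(\pi)\ell^{-1}$, which together with the palindrome property completes the proof. The one step requiring genuine care is transporting the positivity criterion from $u$ down to the factor $\pi b$: the hypothesis on $u$ controls predecessors only inside $u$, so the palindrome assumption is essential precisely in order to guarantee that the first letter $c_1$ of $\pi$ — which has no predecessor inside $\pi b$ — is itself $\le\ell$ and therefore imposes no obstruction. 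Everything else is a direct application of the recalled criterion and of Lemma~\ref{pal}.
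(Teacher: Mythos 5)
Your proof is correct, and it shares the paper's two key ingredients: the positivity criterion for $\rho_\ell$ recalled in Section~\ref{pcw}, and the observation that, since $b>\ell$ immediately follows $\pi$ in $u$, the last letter of $\pi$ is $\leq\ell$ and hence (palindrome) so is its first letter; palindromicity of $\rho_\ell(\pi)\ell^{-1}$ comes from Lemma~\ref{pal} in both arguments. Where you genuinely diverge is in how positivity is extracted. The paper writes $\pi=a$ or $\pi=ava$ with $a\leq\ell$, applies the criterion to the factor $av$ to get that $\rho_\ell(av)$ is positive, and concludes by the explicit computation $\rho_\ell(\pi)\ell^{-1}=\rho_\ell(av)a$ (case $a<\ell$) or $\rho_\ell(av)$ (case $a=\ell$), the only cancellation being the visible $\ell\ell^{-1}$. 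You instead apply the criterion to the whole factor $\pi b$, obtaining that $\rho_\ell(\pi b)=\rho_\ell(\pi)\ell^{-1}b$ is positive, and then strip the trailing $b$; this stripping is where the real content of your variant lies, and your justification is the right one: any word representing $\rho_\ell(\pi)\ell^{-1}$ uses only letters of $A\cup\{\ell^{-1}\}$, free reduction never creates letters, and $b^{-1}\notin A\cup\{\ell^{-1}\}$ because $b>\ell$, so the final $b$ cannot be cancelled and positivity of $\rho_\ell(\pi b)$ forces positivity of $\rho_\ell(\pi)\ell^{-1}$. Your route avoids the paper's case analysis ($\pi=a$ versus $\pi=ava$, $a<\ell$ versus $a=\ell$) at the price of this small free-group cancellation argument, which must be stated carefully (as you do), since in general $gb$ positive does not imply $g$ positive; the paper's route is more pedestrian but never needs to reason about reduced forms.
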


\begin{proof} We know that each letter $>\ell$ in $u$ is preceded by a letter $\leq \ell$. Hence the last letter of $\pi$ is $a\leq \ell$, and equal to its first letter: $\pi=ava$, or $\pi=a$. If $\pi=a$, then $\rho_\ell(\pi)\ell^{-1}=a$ if $a<\ell$, and $\rho_\ell(\pi)\ell^{-1}=1$ if $a=\ell$, which settles this case.

Suppose now that $\pi=ava$.
Then each letter $>\ell$ in $av$ is preceded by a letter $\leq \ell$; therefore $\rho_\ell(av)$ is positive. If $a<\ell$, then $\rho_\ell(\pi)\ell^{-1}
=\rho_\ell(av)a\ell\ell^{-1}=\rho_\ell(av)a$ is positive; if $a=\ell$, $\rho_\ell(\pi)\ell^{-1}=\rho_\ell(av)\ell\ell^{-1}=\rho_\ell(av)$ is positive. Moreover, $\rho_\ell(\pi)\ell^{-1}$ is a palindrome by Lemma \ref{pal}.
\end{proof}

The easy proofs of the two following lemmas are left to the reader.

\begin{lemma}\label{next} Let $E$ be a totally ordered finite set, with some element $M$. Suppose that there exists an injective function $\nu:E\setminus M\to E$ such that $\forall x\in E\setminus M, x<\nu(x)$. Then $M=\max(E)$ and $\nu$ is the next element function of $E$ (that is $\nu(x)=\min\{y\in E,y>x\}$).
\end{lemma}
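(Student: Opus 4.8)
The plan is to prove the two assertions in order: first that $M$ must be the maximum of $E$, and then, using this, to identify $\nu$ with the next-element function by a short downward induction.

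First I would pin down $M$. Since $E$ is finite and totally ordered it has a maximum $m=\max(E)$. If $m\neq M$, then $m$ lies in the domain $E\setminus M$ of $\nu$, so $\nu(m)$ is defined and satisfies $m<\nu(m)$ with $\nu(m)\in E$; this contradicts the maximality of $m$. Hence $m=M$, so $M=\max(E)$, and $M$ is exactly the one element of $E$ omitted from the domain of $\nu$.

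Write $E=\{e_1<e_2<\cdots<e_n\}$, so that $M=e_n$ and the domain of $\nu$ is $\{e_1,\dots,e_{n-1}\}$. The key step is a downward induction on $i$ establishing $\nu(e_i)=e_{i+1}$ for $i=n-1,n-2,\dots,1$. For the base case, $\nu(e_{n-1})>e_{n-1}$ leaves $e_n$ as the only possible value, so $\nu(e_{n-1})=e_n$. For the inductive step, assume $\nu(e_j)=e_{j+1}$ for every $j$ with $i<j\le n-1$. Then $\nu(e_i)>e_i$ forces $\nu(e_i)\in\{e_{i+1},\dots,e_n\}$, but injectivity of $\nu$ together with the induction hypothesis rules out each of $e_{i+2},\dots,e_n$ (these are already the values $\nu(e_{i+1}),\dots,\nu(e_{n-1})$). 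Thus $\nu(e_i)=e_{i+1}=\min\{y\in E:y>e_i\}$, which is precisely the next-element function. As a sanity check one then sees that $\nu$ is a bijection of $\{e_1,\dots,e_{n-1}\}$ onto $\{e_2,\dots,e_n\}$, consistent with $e_1=\min(E)$ never being a value of $\nu$ (since $\nu(x)>x\ge e_1$).

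I do not expect a genuine obstacle here; the lemma is elementary. The only points requiring care are running the induction downward from the top rather than upward (an upward attempt does not immediately fix the value, since a priori $\nu(e_i)$ could skip ahead), and disposing of the degenerate case $n=1$, where the domain of $\nu$ is empty, $M=\max(E)$ trivially, and the statement about $\nu$ holds vacuously.
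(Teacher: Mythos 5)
Your proof is correct. Note that the paper does not actually supply a proof of this lemma --- it states that ``the easy proofs of the two following lemmas are left to the reader'' --- so there is nothing to compare against; your argument (ruling out $M\neq\max(E)$ by applying $\nu$ to the maximum, then a downward induction from $e_{n-1}$ using injectivity to pin $\nu(e_i)=e_{i+1}$) is the natural elementary justification and fills the gap completely, including the vacuous case $n=1$.
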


\begin{lemma}\label{<} Let $w$ be a word with a special factorization as in (\ref{special}). Consider $i=1,\ldots,k-1$ and some factorization $\pi_i=uv$. Then
$$
va_i\cdots \pi_1a_1a_k\pi_{k-1}\cdots a_{i+1}u < va_{i+1}\cdots \pi_{k-1}a_ka_1\pi_1\cdots a_iu ,
$$
the left-hand side word is conjugate to $W$, and the right-hand side word is conjugate to $w$.
\end{lemma}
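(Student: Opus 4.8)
The plan is to verify both conjugacy claims by directly unfolding the two definitions, and then to settle the inequality by a single lexicographic comparison at one position. There is no deep content here; the whole argument is bookkeeping plus the order on the alphabet.

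First I would substitute the factorization $\pi_i = uv$ into both $w = a_1\pi_1 a_2\cdots \pi_{k-1}a_k$ and $W = a_k\pi_{k-1}\cdots \pi_1 a_1$, and locate where the block $\pi_i$ sits in each. In $w$ the factor $\pi_i$ occurs between $a_i$ and $a_{i+1}$, so $w = (a_1\pi_1\cdots a_i\, u)(v\, a_{i+1}\pi_{i+1}\cdots \pi_{k-1}a_k)$; in $W$ the same block occurs between $a_{i+1}$ and $a_i$, so $W = (a_k\pi_{k-1}\cdots a_{i+1}\, u)(v\, a_i\pi_{i-1}\cdots \pi_1 a_1)$.

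Next I would perform the cyclic rotation indicated by these bracketings. Moving the first bracketed factor of $w$ to the end yields exactly the right-hand word $v\,a_{i+1}\cdots \pi_{k-1}a_k\,a_1\pi_1\cdots a_i u$, and moving the first bracketed factor of $W$ to the end yields exactly the left-hand word $v\,a_i\cdots \pi_1 a_1\,a_k\pi_{k-1}\cdots a_{i+1}u$. This establishes that the right-hand side is conjugate to $w$ and the left-hand side is conjugate to $W$, as claimed.

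Finally, for the inequality I would observe that both words begin with the common prefix $v$; immediately after $v$ the left-hand word carries the letter $a_i$, while the right-hand word carries $a_{i+1}$. Since $a_i < a_{i+1}$ in the total order on $A$, the two words first differ at position $|v|+1$, with the smaller letter on the left, so the left-hand word is lexicographically smaller. The only point requiring care is the index bookkeeping in the interleaved block structure—keeping straight that $\pi_i$ is flanked by $a_i,a_{i+1}$ in $w$ but by $a_{i+1},a_i$ in $W$. I expect no genuine obstacle; the boundary cases $i=1$, $i=k-1$, and $u$ or $v$ empty are all handled identically, since the decisive comparison still occurs at the letter immediately following the shared prefix $v$.
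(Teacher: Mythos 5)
Your proof is correct, and it fills in exactly the argument the authors intended: the paper explicitly leaves this lemma's proof to the reader, and the natural route is the one you take (locate $\pi_i=uv$ between $a_i,a_{i+1}$ in $w$ and between $a_{i+1},a_i$ in $W$, rotate each to exhibit the conjugacies, then compare the letter following the common prefix $v$). All steps, including the boundary cases, check out.
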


\begin{lemma}\label{consecutive} Let $w$ be a primitive word with a special factorization (\ref{special}), such that $w$ is conjugate to $W$. Consider two consecutive rows of its Burrows-Wheeler matrix, viewed as two words $w',w''$. Then $$w'=va_i\cdots \pi_1a_1a_k\pi_{k-1}\cdots a_{i+1}u$$ and $$w''=va_{i+1}\cdots \pi_{k-1}a_ka_1\pi_1\cdots a_iu,$$ for some $i=1,\ldots,k-1$, and some factorization $\pi_i=uv$. Moreover, $w$ is the smallest, and $W$ the largest, element in their conjugation class.
\end{lemma}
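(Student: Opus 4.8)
The plan is to match the $n-1$ pairs produced by Lemma~\ref{<} with the $n-1$ pairs of consecutive rows of the Burrows--Wheeler matrix, and then to read off the extremality of $w$ and $W$ from the abstract ``next element'' criterion of Lemma~\ref{next}. Write $n=|w|$ and let $\mathcal C$ be the conjugation class of $w$; since $w$ is primitive $|\mathcal C|=n$, the rows of the matrix are the elements of $\mathcal C$ in increasing order, and there are exactly $n-1$ consecutive pairs. Call a \emph{cut} a pair $\phi=(i,(u,v))$ with $1\le i\le k-1$ and $\pi_i=uv$; the number of cuts is $\sum_{i=1}^{k-1}(|\pi_i|+1)=(n-k)+(k-1)=n-1$, matching the number of consecutive pairs. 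To each cut associate the two words $L(\phi)$ and $R(\phi)$ on the left and right of the inequality in Lemma~\ref{<}: by that lemma $L(\phi)<R(\phi)$, the word $L(\phi)$ is a conjugate of $W$, and $R(\phi)$ is a conjugate of $w$, so both lie in $\mathcal C$.

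First I would prove that $R$ is a bijection from cuts onto $\mathcal C\setminus\{w\}$ and $L$ a bijection onto $\mathcal C\setminus\{W\}$. Observe that $R(\phi)$ is the rotation of $w$ that starts at the first letter of $v$ inside the block $\pi_i$. As $i$ runs over $1,\dots,k-1$ and the cut sweeps through $\pi_i$ (the case $v$ empty giving the start $a_{i+1}$), these starting points range over every position occupied by a letter of some $\pi_i$ together with the separator positions $a_2,\dots,a_k$---that is, all positions except the first---so $R$ meets every conjugate except $w$, and primitivity makes these $n-1$ conjugates distinct. The same computation applied to $W=a_k\pi_{k-1}\cdots a_2\pi_1a_1$, in which $\pi_i$ sits between $a_{i+1}$ and $a_i$, shows that $L(\phi)$ is the rotation of $W$ beginning at the first letter of $v$, and that $L$ ranges bijectively over all conjugates of $W$ except $W$ itself.

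With both bijections available, set $\nu=R\circ L^{-1}\colon\mathcal C\setminus\{W\}\to\mathcal C$. It is injective, and $\nu(x)=R(L^{-1}(x))>L(L^{-1}(x))=x$ for all $x$ by Lemma~\ref{<}. Lemma~\ref{next}, applied with $E=\mathcal C$ and $M=W$, then gives immediately that $W=\max\mathcal C$ and that $\nu$ is the next-element function of $\mathcal C$. Since the image of the next-element function is $\mathcal C$ with its least element removed, while the image of $\nu$ equals the image of $R$, namely $\mathcal C\setminus\{w\}$, we conclude $w=\min\mathcal C$; thus $w$ and $W$ are respectively the first and last rows of the matrix. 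Finally, any two consecutive rows are $x$ and $\nu(x)$ for some $x\ne W$, and taking the cut $\phi=L^{-1}(x)$ gives $x=L(\phi)=w'$ and $\nu(x)=R(\phi)=w''$, which are exactly the two words displayed in the statement.

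The position bookkeeping of the middle paragraph is routine; the one genuine obstacle is recognizing that Lemma~\ref{next} is the right abstraction, so that the $n-1$ strict inequalities of Lemma~\ref{<}, combined with the bijectivity of $L$ and $R$, force $\nu$ to be precisely the successor map and thereby pin down both endpoints at once. I would verify the two starting-position counts with particular care, since the whole argument depends on each family omitting exactly one conjugate---namely $w$ for $R$ and $W$ for $L$.
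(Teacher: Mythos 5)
Your proposal is correct and is essentially the paper's own argument: you associate to each cut the pair $(w',w'')$ from Lemma~\ref{<}, observe that the $w''$ exhaust $\mathcal C\setminus\{w\}$ and the $w'$ exhaust $\mathcal C\setminus\{W\}$ (your $R$ and $L$ bijections, which the paper states more tersely via primitivity), define the increasing injection $\nu(w')=w''$, and invoke Lemma~\ref{next} to identify $\nu$ with the successor map, $W$ with the maximum, and $w$ with the minimum since it is not in the image of $\nu$. Your explicit counting of starting positions is just a more detailed justification of the step the paper compresses into ``these words are distinct since $w$ is primitive.''
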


\begin{proof} Let $\mathcal C$ denote the set of conjugates of $w$ and $W$. Note that, by Lemma \ref{<},
the elements 
of $\mathcal C$  are $w$, together with all words $w''$,
and these words are 
distinct since $w$ is primitive. Similarly, since $\mathcal C$ is also the conjugation class of $W$,
the elements of $\mathcal C$ are $W$, together 
with all the words $w'$,
and these words are distinct.

Define a mapping $\nu:\mathcal C\setminus W\to C$ by 
\begin{equation}\label{f}
\nu(w')=w''. 
\end{equation}
This mapping is well-defined, by the previous paragraph, and by Lemma \ref{<}. Moreover, this paragraph also shows that $\nu$ is injective (the words at the right 
of (\ref{f}) are all distinct). Furthermore, by Lemma \ref{<}, $\nu$ maps each element of $\mathcal C\setminus W$ onto a larger element in $\mathcal C$.

Thus by Lemma \ref{next}, $W=\max(\mathcal C)$ and $\nu$ is the next function in $\mathcal C$.  Moreover, $w=\min(\mathcal C)$, since $w$ is not in the image of $\nu$.
\end{proof}

\begin{lemma}\label{position} Let $w$ be a perfectly clustering Lyndon word $w$ with a special palindromic factorization (\ref{special}). Then for any $i$, $a_{i}\pi_i\cdots \pi_{k-1}a_ka_1\pi_1\cdots a_{i-1}\pi_{i-1}$ is the smallest conjugate of $w$ beginning by $a_i$. 
\end{lemma}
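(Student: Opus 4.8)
The plan is to exhibit the word $c_i = a_i\pi_i\cdots\pi_{k-1}a_ka_1\pi_1\cdots a_{i-1}\pi_{i-1}$ as a conjugate of $w$ beginning with $a_i$, and then to show it is the smallest such conjugate by relating it to the structure identified in Lemma~\ref{consecutive}. First I would observe that $c_i$ is obtained from the special factorization (\ref{special}) by a cyclic rotation: starting at position $a_i$ and wrapping around. Since $w$ is a perfectly clustering Lyndon word, by Theorem~\ref{char1} (which we may invoke, as the equivalence of (i), (ii), (iii) is being used here) it has a special factorization with $w$ conjugate to $W$, so Lemma~\ref{consecutive} applies and tells us exactly which words appear as rows of the Burrows-Wheeler matrix.

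The key step is to connect $c_i$ with the interval-exchange description from Section~\ref{interval-exchange}. By the discussion following Lemma~\ref{order-conjugates}, the smallest conjugate of $w$ beginning with $a_i$ is precisely $m_i$ (identified with the minimum of the interval $I_i$ under the order isomorphism between the conjugation class $\mathcal{C}$ and $[n]$). So the claim is equivalent to showing $c_i = m_i$. I would prove this by taking the word $w'$ from Lemma~\ref{consecutive} with the choice $u = \pi_{i-1}$, $v = 1$ (the empty prefix, i.e.\ the factorization $\pi_{i-1} = \pi_{i-1}\cdot 1$ at the boundary): this produces a conjugate of $W$ of the form $a_i\cdots\pi_1a_1a_k\pi_{k-1}\cdots\pi_{i}$, and by comparing it against $c_i$ I would check that $c_i$ is the immediate candidate for a conjugate of $w$ beginning with $a_i$. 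More directly, I would argue that any conjugate of $w$ beginning with $a_i$ starts at some occurrence of the letter $a_i$ in $w$; since the special factorization places the distinguished letters $a_1,\ldots,a_k$ as the separators and these are the first occurrences in lexicographic terms, the rotation beginning at the separator $a_i$ yields the lexicographically smallest one.

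The main obstacle is the lexicographic minimality, not the fact that $c_i$ is a conjugate beginning with $a_i$ (which is immediate from the rotation). To handle minimality I would use the palindromic hypothesis together with Lemma~\ref{<}: for any other occurrence of $a_i$ inside some block $\pi_j$, the corresponding rotation will, after the initial $a_i$, continue with letters coming from the interior of a palindrome, and I must show these continuations are lexicographically larger than the continuation $\pi_i\cdots\pi_{k-1}a_k\cdots$ of $c_i$. The cleanest route is to translate everything into the interval-exchange picture: the rotation $c_i$ corresponds to starting the orbit $(r,\sigma(r),\ldots)$ at $r = m_i = \min(I_i)$, and Lemma~\ref{order-conjugates} gives $w_r <_{lex} w_s \Leftrightarrow r < s$, so the conjugate associated to $m_i$ is automatically the smallest among all conjugates whose orbit starts in $I_i$, i.e.\ among all conjugates beginning with $a_i$.

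Thus the heart of the proof is the identification $c_i = w_{m_i}$, i.e.\ verifying that reading the $\sigma$-orbit starting from $m_i$ and applying the letter-substitution rule reproduces exactly the cyclic rotation $a_i\pi_i\cdots\pi_{k-1}a_ka_1\pi_1\cdots a_{i-1}\pi_{i-1}$. I would establish this by showing that $m_i$ sits at the position of the separator letter $a_i$ in the special factorization: under the order isomorphism $\mathcal{C}\cong[n]$, the conjugate $c_i$ begins with $a_i$, and by Lemma~\ref{mhmh+1} together with the derived inequality $\sum_{a<\ell}|u|_a > \sum_{a>\ell}|u|_a$ one controls how the separators $a_i$ distribute among the intervals $I_1,\ldots,I_k$, forcing $c_i$ to be the representative sitting at $m_i$. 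Once this identification is in place, minimality is a direct consequence of Lemma~\ref{order-conjugates}, and the proof is complete.
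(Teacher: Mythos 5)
Your proposal has a genuine gap at its central step, together with a circularity problem. The circularity: you invoke Theorem \ref{char1} to conclude that $w$ is conjugate to $W$, but in the paper's logical order Lemma \ref{position} feeds into Lemma \ref{pi=1}, which feeds into Lemma \ref{nonempty}, which is used in the proof of (i) $\Rightarrow$ (ii) of Theorem \ref{char1}; so that theorem cannot be assumed here. The fact you need must be obtained independently, as the paper does: since $w$ is perfectly clustering, it is a product of two palindromes and $\tilde w$ is its largest conjugate (\cite{SP}, Corollary 4.4 and Theorem 4.3), and since the given factorization is palindromic, $W=\tilde w$; hence $w$ is conjugate to $W$ and Lemma \ref{consecutive} applies. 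This part of your argument is fixable.

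The serious gap is the identification $c_i = w_{m_i}$. Your reduction to it is sound (by the remarks after Lemma \ref{order-conjugates}, the smallest conjugate beginning with $a_i$ is the one identified with $m_i$), but you never prove the identification: you appeal to Lemma \ref{mhmh+1} and to the inequality $\sum_{a<\ell}|u|_a>\sum_{a>\ell}|u|_a$ to ``control how the separators distribute,'' and neither tool does the job. Lemma \ref{mhmh+1} only characterizes when $\sigma(m_h)=m_{h+1}$, i.e., the degenerate case where $\pi_h$ is empty (that is exactly Lemma \ref{pi=1}, which is downstream of the present lemma), and the quoted inequality belongs to the context of the automorphism $\rho_\ell$, which plays no role in this statement. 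Proving $c_i=w_{m_i}$ amounts to tracking the $\sigma$-orbit of $1$ and showing it reaches $m_i$ after exactly $|a_1\pi_1\cdots a_{i-1}\pi_{i-1}|$ steps --- which is essentially the lemma itself, so nothing has been gained. The paper closes the argument precisely where you stopped: by Lemma \ref{consecutive}, two consecutive rows $w',w''$ of the Burrows-Wheeler matrix (with $\pi_{i-1}=uv$) begin with the same letter unless $v=1$, and in that exceptional case $w''$ is exactly $c_i$ while $w'$ begins with $a_{i-1}$. Since conjugates sharing a first letter are contiguous in the sorted conjugation class, having a predecessor beginning with $a_{i-1}<a_i$ makes $c_i$ the smallest conjugate beginning with $a_i$ (the case $i=1$ being clear because $w$ is a Lyndon word). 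You had the relevant pair in hand (your choice $u=\pi_{i-1}$, $v=1$) but did not draw this conclusion, pivoting instead to the unproven interval-exchange identification.
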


\begin{proof} 
The conclusion is clear for $i=1$, since $w$ is a Lyndon word.
Suppose that $i\geq 2$. It is enough to prove that, in the conjugation class of $w$, the word preceding $a_{i}\pi_i\cdots \pi_{k-1}a_ka_1\pi_1\cdots a_{i-1}\pi_{i-1}$ for the lexicographical order begins by $a_{i-1}$. This follows from the next paragraph.

Since $w$ is a perfectly clustering word, it is a product of two palindromes (\cite{SP} Corollary 4.4), hence the word $\tilde w$ is conjugate to $w$; moreover, $\tilde w$ is the largest conjugate of $w$ for lexicographical order (\cite{SP} Theorem 4.3), and clearly $W=\tilde w$. By Lemma \ref{consecutive}, for two consecutive rows of the Burrows-Wheeler matrix of $w$, viewed as two words $w',w''$, one has
$$
w'=va_{i-1}\cdots \pi_1a_1a_k\pi_{k-1}\cdots a_{i}u$$ and $$w''=va_{i}\cdots \pi_{k-1}a_ka_1\pi_1\cdots a_{i-1}u,$$for some $i=1,\ldots,k-1$, with 
$\pi_{i-1}=uv$. Hence $w',w''$ begin by the same letter, except if $v=1$, in which case $w'$ begins by $a_{i-1}$ and $w''=a_{i}\cdots \pi_{k-1}
a_ka_1\pi_1\cdots a_{i-1}\pi_{i-1}$ begins by $a_{i}$.
\end{proof}

\begin{lemma}\label{pi=1} Let $w$ be a perfectly clustering Lyndon word $w$ with a special palindromic factorization (\ref{special}). Let $s=1,\ldots,k-1$; if $\pi_s$ 
is the empty word, then $c_1+\cdots+c_s=c_{s+1}+\cdots+c_k$.
\end{lemma}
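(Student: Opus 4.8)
The plan is to translate the hypothesis that $\pi_s$ is empty into the language of interval exchanges, where it becomes precisely the condition $\sigma(m_s)=m_{s+1}$; Lemma~\ref{mhmh+1} then equates this condition with the desired balance $c_1+\cdots+c_s=c_{s+1}+\cdots+c_k$. The bridge between the two languages is the identification recorded after Lemma~\ref{order-conjugates}: under the order isomorphism between the lexicographically ordered conjugates of $w$ and $[n]$, the permutation $\sigma$ becomes the conjugator $C$ sending $au$ to $ua$, while $m_h$ becomes the smallest conjugate of $w$ beginning by $a_h$.

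First I would invoke Theorem~\ref{2conditions} to view the perfectly clustering word $w$ as a word encoding of a circular symmetric discrete interval exchange $\sigma$ associated with the composition $(c_1,\ldots,c_k)$, where $c_j=|w|_{a_j}$. Next I would apply Lemma~\ref{position} twice to obtain explicit expressions for the two relevant smallest conjugates:
\begin{align*}
m_s&=a_s\pi_s a_{s+1}\pi_{s+1}\cdots \pi_{k-1}a_ka_1\pi_1\cdots a_{s-1}\pi_{s-1},\\
m_{s+1}&=a_{s+1}\pi_{s+1}\cdots \pi_{k-1}a_ka_1\pi_1\cdots a_{s-1}\pi_{s-1}a_s\pi_s.
\end{align*}

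Since $\sigma$ acts as the conjugator $C$, I would then compute $\sigma(m_s)$ by moving the leading letter $a_s$ of $m_s$ to the end, obtaining
\[
\sigma(m_s)=\pi_s a_{s+1}\pi_{s+1}\cdots \pi_{k-1}a_ka_1\pi_1\cdots a_{s-1}\pi_{s-1}a_s.
\]
Under the hypothesis $\pi_s=1$, the factor $\pi_s$ vanishes both from this word and from the expression for $m_{s+1}$, so the two words literally coincide and $\sigma(m_s)=m_{s+1}$. Applying Lemma~\ref{mhmh+1} with $h=s$ then gives $\sum_{i>s}c_i-\sum_{i\leq s}c_i=0$, which is exactly the claimed equality $c_1+\cdots+c_s=c_{s+1}+\cdots+c_k$.

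The only point requiring care is the bookkeeping of the identifications: one must be sure that $\sigma$ genuinely corresponds to $C$ and that the words furnished by Lemma~\ref{position} really are the minima $m_s$ and $m_{s+1}$ in the cyclic form used above. Once these match up, the argument collapses to the single letter-by-letter comparison displayed, so I expect no substantive obstacle beyond this alignment, all the real work having already been done in Lemmas~\ref{mhmh+1}, \ref{order-conjugates} and~\ref{position}.
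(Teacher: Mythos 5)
Your proposal is correct and follows essentially the same route as the paper's own (very terse) proof: both translate the hypothesis $\pi_s=1$ into $\sigma(m_s)=m_{s+1}$ via the identification of $\sigma$ with the conjugator (remarks after Lemma~\ref{order-conjugates}) together with Lemma~\ref{position}, and then conclude with Lemma~\ref{mhmh+1}. Your write-up merely makes explicit the bookkeeping (Theorem~\ref{2conditions}, the explicit conjugates, the letter-shift computation) that the paper leaves implicit.
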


\begin{proof} By the remarks following Lemma \ref{order-conjugates}, taking into account Lemma \ref{position}, the hypothesis implies that $\sigma(m_s)=m_{s+1}$, hence we conclude using Lemma \ref{mhmh+1}.
\end{proof}

\begin{lemma}\label{nonempty} Let $w$ be a perfectly clustering Lyndon word $w$ with a special palindromic factorization (\ref{special}). Suppose that $
\rho_\ell(w)$ is a positive word. Let $i$ be maximum such that $a_i\leq \ell$. Then $\pi_j$, $j=i,\ldots,k-1$ are
nonempty.
\end{lemma}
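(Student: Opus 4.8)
The plan is to split the indices into the two regimes $j>i$ and $j=i$, treating the former by a direct adjacency argument and the latter by counting; the case $j=i$ is where the real work lies. Set $c_h=|w|_{a_h}$, and note that, because $i$ is maximal with $a_i\le\ell$, the letters of $A$ exceeding $\ell$ are precisely $a_{i+1},\dots,a_k$, so that $\sum_{t\le i}c_t=\sum_{a\le\ell}|w|_a$ and $\sum_{t>i}c_t=\sum_{a>\ell}|w|_a$. I will use the positivity criterion recalled in Section \ref{pcw}: the word $\rho_\ell(w)$ is positive exactly when every letter $>\ell$ occurring in $w$ is immediately preceded in $w$ by a letter $\le\ell$.

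First I would settle the indices $j=i+1,\dots,k-1$. For each such $j$ both $a_j$ and $a_{j+1}$ exceed $\ell$. If $\pi_j$ were empty, then reading off the special factorization (\ref{special}) the factor $a_j\pi_ja_{j+1}=a_ja_{j+1}$ would occur in $w$, so the letter $a_{j+1}>\ell$ would be immediately preceded by the letter $a_j>\ell$. This violates the positivity criterion, and hence $\pi_j$ is nonempty for every $j>i$.

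The case $j=i$ is the main obstacle, because the adjacency argument breaks down: if $\pi_i$ were empty the factor exposed is $a_ia_{i+1}$, where the large letter $a_{i+1}$ is preceded by $a_i\le\ell$, so positivity is not contradicted. Here I would argue by counting instead. Assuming $\pi_i$ empty, Lemma \ref{pi=1} (which goes through Lemmas \ref{mhmh+1} and \ref{position}) forces the equality $\sum_{t\le i}c_t=\sum_{t>i}c_t$. Against this I would set the strict inequality $\sum_{a<\ell}|w|_a>\sum_{a>\ell}|w|_a$, which is the count computation of Section \ref{pcw}, valid because here $\rho_\ell$ strictly increases the length of $w$. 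Combining these,
\[
\sum_{t\le i}c_t=\sum_{a\le\ell}|w|_a\;\ge\;\sum_{a<\ell}|w|_a\;>\;\sum_{a>\ell}|w|_a=\sum_{t>i}c_t,
\]
which contradicts the equality delivered by Lemma \ref{pi=1}. Therefore $\pi_i$ is nonempty as well, completing the proof.

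The hard part is precisely this last step: positivity by itself does not forbid $\pi_i$ from being empty --- a small-then-large adjacency such as $a_ia_{i+1}$ is harmless --- so the argument must import the letter-count information and play the strict inequality $\sum_{a<\ell}|w|_a>\sum_{a>\ell}|w|_a$ off against the tie produced by Lemma \ref{pi=1}. Once that strict inequality is in hand the contradiction is immediate, and everything else is bookkeeping.
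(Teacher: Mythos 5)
Your proof is correct and follows essentially the same route as the paper's: the adjacency/positivity argument disposes of $j=i+1,\ldots,k-1$, and for $j=i$ the contradiction is obtained by playing Lemma \ref{pi=1} against the strict inequality $\sum_{a<\ell}|w|_a>\sum_{a>\ell}|w|_a$ quoted from Section \ref{pcw}. The only differences are cosmetic: your single inequality chain absorbs the paper's case split ($a_i=\ell$ versus $a_i<\ell$, i.e.\ whether $\ell$ occurs in $w$), and --- like the paper --- you must import from the ambient context the fact that $\rho_\ell$ strictly lengthens $w$ (which is what validates that strict inequality), since positivity of $\rho_\ell(w)$ alone does not imply it.
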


\begin{proof} Since $\rho(u)\in A^*$, we know by Section \ref{basic} that: (*) in $u$, each letter $>\ell$ is 
preceded by some letter $\leq \ell$. 

Suppose first that $j=i+1,\ldots,k-1$.  Then $a_j,a_{j+1}>\ell$. In $u$, $a_{j+1}$ is preceded by $a_j\pi_j$. Hence, by (*), $\pi_j$ cannot be empty.

Suppose now that $j=i$. We have two cases: $a_i=\ell$, $a_i<\ell$. By the result quoted at the end of Section \ref{pcw}, we have $
\sum_{a<\ell}|u|_a>\sum_{a>\ell}|u|_a$. Suppose by contradiction that $\pi_i$ is the empty word; then by Lemma \ref{pi=1}, we have 
$|w|_{a_1}+\cdots+|w|_{a_i}=|w|_{a_{i+1}}+\cdots+|w|_{a_k}$.

Consider the first case: $a_i=\ell$. Then the former inequality becomes $|w|_{a_1}+\cdots+|w|_{a_{i-1}}>|w|_{a_{i+1}}+\cdots+|w|_{a_k}$, contradicting the equality since $|w|_{a_i}>0$.
Consider the second case: $a_i<\ell$, hence $a_{i+1}>\ell$, and $|w|_{\ell}=0$. The inequality becomes 
$|w|_{a_1}+\cdots+|w|_{a_{i}}>|w|_{a_{i+1}}+\cdots+|w|_{a_k}$, contradicting the equality too.
\end{proof}

\section{Proof of Theorem \ref{char1}}\label{proof}

(i) $\Rightarrow$ (ii). It is known that $w$ is conjugate to its reversal $\tilde w$ (see \cite[Corollary 4.4]{SP}). If $w$ is of length at most 2, 
then (ii) holds clearly. Suppose now that $w$ is of length at least 3. Then by the result quoted in Section \ref{basic}, $w=f(u)$ for some 
shorter perfectly clustering word $u$ and some automorphism $f=\lambda_\ell$ or $\rho_\ell$, $\ell\in A$, with the further property: either a) 
$A=Alph(w)=Alph(u)$, or b) $A=Alph(w)=Alph(u)\cup \ell$ and $\ell\notin Alph(u)$.

By induction, we may assume that $u=a_1\pi_1a_2\pi_2\cdots \pi_{k-1}a_k$, 
where $Alph(u)=\{a_1<a_2<\cdots<a_k\}$ and where the word $\pi_i$ are palindromes.
We may assume that $f=\rho_\ell$, the other case being similar. In fact, there is a relation between $\rho_\ell$ and $\lambda_\ell$ given by the antimorphism sending the letter $a_{i}$ to $a_{k-i+1}$. This antimorphism preserve perfectly clustering words (see \cite[Proposition 4.4]{La}).

Write $\rho$ for $\rho_\ell$.
Since $\rho(u)\in A^*$, we know by Section \ref{basic} that: (*) in $u$, each letter $>\ell$ is 
preceded by some letter $\leq \ell$. 

In case a), let $\ell=a_i$. We know that $\pi_{i},\ldots, \pi_k$ are nonempty, by Lemma \ref{nonempty}.

We have $u=\left(\prod_{1\leq j\leq i-1}a_j\pi_j\right) a_i \left(\prod_{i\leq j\leq k-1}\pi_ja_{j+1}\right)$. Hence $$w=\left(\prod_{1\leq j\leq i-1}
a_ja_i\rho(\pi_j)\right) a_i \left(\prod_{i\leq j\leq k-1}\rho(\pi_j)a_i^{-1}a_{j+1}\right).$$ Since $\pi_j$ is nonempty for $j=i,\ldots,k-1$, and is followed by $a_{j+1}$ in $u$,
it follows from Lemma \ref{positive-pal} that $\rho(\pi_j)a_i^{-1}$ is a positive palindrome. For $j=1,\ldots,i-1$, $\rho(\ell \pi_i)=\rho(a_i\pi_i)=a_i\rho(\pi_j)$ is a positive palindrome by (*) and 
Lemma \ref{pal}.
This concludes case a).

In case b), we have for some $i$, $a_1<\cdots<a_{i}<\ell<a_{i+1}<\ldots<a_k$. 

We know that $\pi_{i+1},\ldots, \pi_k$ are nonempty, by Lemma \ref{nonempty}.

Note that $i\geq 1$, since otherwise $\ell<a_1$, and since 
$a_1$ is not preceded in $u$ by any letter, this contradicts (*). 

We have therefore
$$u=a_1\prod_{1\leq j\leq k-1}\pi_ja_{j+1}=
a_1\left(\prod_{1\leq j\leq i-1}\pi_ja_{j+1}\right)\left(\prod_{i\leq j\leq k-1}\pi_ja_{j+1}\right),$$ hence 
\begin{align*}
    w &=a_1\ell\left(\prod_{1\leq j\leq i-1}\rho(\pi_j)a_{j+1}\ell\right)
\left(\prod_{i\leq j\leq k-1}\rho(\pi_j)\ell^{-1} a_{j+1}\right) \\
 &=a_1\left(\prod_{1\leq j\leq i-1}\ell\rho(\pi_j)a_{j+1}\right)\ell \left(\prod_{i\leq j\leq k-1}\rho(\pi_j)\ell^{-1} a_{j+1}\right)\\
 &=a_1q_1\cdots a_{i-1}q_{i-1}a_iq_i\ell q_{i+1}\cdots a_{k-1}q_{k}a_k,
\end{align*}
 with $q_1=\ell\rho(\pi_1),\ldots,q_{i-1}=
\ell\rho(\pi_{i-1}),q_i=1,q_{i+1}=\rho(\pi_i)\ell^{-1},\ldots,q_k=\rho(\pi_{k-1})\ell^{-1}$.
The elements $q_j$ are all positive palindromes: this is proved as in case a).

(ii) $\Rightarrow$ (iii). By hypothesis $w$ has a palindromic special factorization (\ref{special}) and $w$ is conjugate to $\tilde w$, since $w$ is a product of two palindromes. Since the $\pi_i$ are palindromes, $\tilde w=W$. Thus (iii) holds.

(iii) $\Rightarrow$ (i) 
Let $\mathcal C$ denote the set of conjugates of $w$. By hypothesis, $W\in\mathcal C$. Define a mapping $\nu:C\setminus W\to C$ by 
\begin{equation}\label{f1}
\nu(va_i\cdots \pi_1a_1a_k\pi_{k-1}\cdots a_{i+1}u)=va_{i+1}\cdots \pi_{k-1}a_ka_1\pi_1\cdots a_iu, 
\end{equation}
where $\pi_i=uv$.
This mapping is the next element function, by Lemma \ref{consecutive}, 
$W$ is the largest element of $\mathcal C$, and $w$ its smallest, and in 
particular a 
Lyndon word. Thus $\mathcal C=\{w<\nu(w)<\nu^2(w)<\cdots<\nu^{n-1}(w)\}$ with $n$ the length of $w$.

Now, by Equation (\ref{f1}), we see that the last letter of $x=va_i\cdots \pi_1a_1a_k\pi_{k-1}\cdots a_{i+1}u$ and that of $\nu(x)$ are equal, except if $u$ is the empty word. 
This can happen exactly for $k-1$ elements $x$ of $C\setminus W$, namely those of the form $x_i=\pi_ia_i\cdots \pi_1a_1a_k\pi_{k-1}\cdots a_{i+1}$ ($u=1$, 
$v=\pi_i$). The last letter of $x_i$ is $a_{i+1}$ and that of $\nu(x_i)=\pi_ia_{i+1}\cdots \pi_{k-1}a_ka_1\pi_1\cdots a_i$ is $a_i$. Since the last letter of $w$ is 
$a_k$, this implies that the last letters of the words $w,\nu(w),\nu^2(w),\cdots,\nu^{n-1}(w)$ form a weakly decreasing sequence. Thus the last column of the 
Burrows-Wheeler matrix of $w$ is weakly decreasing, and $w$ is therefore perfectly clustering.

\section{Consequences}

From Lemma \ref{position}, we may deduce the uniqueness of the special palindromic factorization of a perfectly clustering Lyndon word, whose existence is asserted in Theorem \ref{char1}.

\begin{corollary} The factorization (\ref{special}) of a perfectly clustering Lyndon word $w$ is unique. Precisely: for any $i$, $a_i\pi_i\cdots \pi_{k-1}a_k$ is the smallest suffix of $w$ beginning by $a_i$, and $a_{i}\pi_i\cdots \pi_{k-1}a_ka_1\pi_1\cdots a_{i-1}\pi_{i-1}$ is the smallest conjugate of $w$ beginning by $a_i$.
\end{corollary}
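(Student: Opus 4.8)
The plan is to dispatch the three assertions in turn, the conjugate characterization being immediate and feeding into the other two. Write $S_i=a_i\pi_i\cdots\pi_{k-1}a_k$ for the claimed suffix and $P_i$ for the complementary prefix, so that $w=P_iS_i$; then the smallest conjugate of $w$ beginning with $a_i$ is exactly $C_i=S_iP_i$, which is precisely the content of Lemma \ref{position}, so the third assertion needs no further work. Everything else will be bootstrapped from this one fact together with the Lyndon property of $w$ (by definition, $w$ is the unique strictly smallest among its $n$ rotations, which are distinct by primitivity).

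To prove the suffix assertion I would fix $i$, take an arbitrary suffix $s\neq S_i$ of $w$ beginning with $a_i$, and show $S_i<s$. Writing $w=ps$, the conjugate $c=sp$ begins with $a_i$, so Lemma \ref{position} gives $C_i\leq c$. If $S_i$ is a proper prefix of $s$ then $S_i<s$ already. If neither of $s,S_i$ is a prefix of the other, they disagree at a position lying inside both words; since $c$ and $C_i$ have $s$ and $S_i$ as prefixes, they inherit this disagreement, so $s<S_i$ would force $c<C_i$, against $C_i\leq c$; hence $S_i<s$.

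The only delicate case---and the main obstacle---is when $s$ is a proper prefix of $S_i$, i.e.\ a border of $S_i$; here the comparison of $c$ and $C_i$ does not reduce to that of $s$ and $S_i$. I would exclude this case by contradiction. Writing $S_i=st$ with $t$ nonempty, Lemma \ref{position} gives $C_i=stP_i\leq c=sp$, so after cancelling the common prefix $s$ one obtains $tP_i<p$, an inequality between two words of the same length $n-|s|$. Now $w=P_ist$, so the rotation $\rho=tP_is$ of $w$ has $tP_i$ as its length-$(n-|s|)$ prefix, while $p$ is the length-$(n-|s|)$ prefix of $w$; moreover $\rho\neq w$, since the rotation moves the factor $P_is$, whose length $n-|t|$ is strictly between $0$ and $n$, to the end. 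As $tP_i$ and $p$ have equal length and $tP_i<p$, they already disagree within these first $n-|s|$ letters, and the same disagreement yields $\rho<w$, contradicting the minimality of the Lyndon word $w$. This rules out the border case and completes the suffix assertion.

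With the suffix assertion in hand, uniqueness is immediate: each $S_i$ is the smallest suffix of $w$ beginning with $a_i$, hence is determined by $w$ alone (the letters $a_1<\cdots<a_k$ being the elements of $Alph(w)$). Since $S_k=a_k$ and $S_i=a_i\pi_iS_{i+1}$ for $i<k$, each palindrome $\pi_i$ is recovered from $w$ as the factor of $S_i$ left after deleting the leading letter $a_i$ and the trailing factor $S_{i+1}$. Thus the words $\pi_1,\ldots,\pi_{k-1}$, and therefore the special palindromic factorization of $w$, are uniquely determined.
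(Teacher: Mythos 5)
Your proof is correct, and it departs from the paper at exactly one point. Both you and the paper ground everything in Lemma \ref{position}: the conjugate assertion is precisely that lemma, and uniqueness then follows because the factorization is recoverable from the suffixes $S_i$ (you spell this recovery out; the paper leaves it implicit). The difference lies in how the suffix assertion is obtained. The paper's entire proof is a citation of \cite{HR}, Proposition 2.1 --- the general fact that for a Lyndon word the lexicographic order of its proper suffixes agrees with the order of the corresponding conjugates --- so that ``smallest suffix beginning by $a_i$'' and ``smallest conjugate beginning by $a_i$'' automatically select the same position of $w$. You instead prove the needed instance from scratch: the cases where $s$ and $S_i$ disagree at a common position, or where $S_i$ is a proper prefix of $s$, transfer immediately to the conjugates $c$ and $C_i$, and the genuinely delicate border case ($s$ a proper prefix of $S_i$) is excluded by exhibiting the rotation $tP_is$ of $w$ and showing it would be lexicographically smaller than $w$, contradicting the Lyndon property --- which is exactly the content hidden inside the citation. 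One step you pass over quickly, upgrading $C_i\leq c$ to the strict inequality $tP_i<p$ after cancelling $s$, is legitimate because $c$ and $C_i$ are rotations of the primitive word $w$ by different amounts (as $|s|\neq|S_i|$), hence distinct words; your opening remark about distinctness of rotations covers this, but it deserves to be said at that point. In sum, the paper's route buys brevity and places the result under a known general principle about Lyndon words, while yours buys a self-contained argument that makes visible exactly where the Lyndon hypothesis and primitivity are used.
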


\begin{proof} 
    The fact that the two last assertions are equivalent follows for example from \cite{HR} Proposition 2.1.
\end{proof}

We study now the transition from a row to the next row, in the Burrows-Wheeler matrix of a perfectly clustering word. We recall first what happens in the case of two letters.

Consider an alphabet $A=\{a<b\}$ with two-letters. A perfectly clustering Lyndon word $w$ on $A^*$ is a lower Christoffel word, by a theorem of Mantaci, Restivo 
and Sciortino \cite[Theorem 9]{MRS}. Thus the last column of the Burrows-Wheeler matrix of $w$ is of the form $b,\ldots,b,a,\ldots,a$. A further striking property is 
that two consecutive rows of this matrix, viewed as two words $u,v$, are always of the form $u=yabx,v=ybax$, with moreover $xy=p$, where $p$ is the 
palindrome such that $w=apb$; see \cite[Corollary 5.1]{BR} (see also \cite[Theorem 2]{PR}, or \cite[Theorem 15.2.4]{R}). For an example, see Figure \ref{BWmatrix}, 
where $w=aaabaab$, $p=aabaa$ and for example the rows 3 and 4: $u=a{\bf ab}aaba,v=a{\bf ba}aaba$, $x=aaba, y=a$.

\begin{figure} 
$$
\begin{array}{cccccccccc}
a&a&a&b&a&a&b\\
a&a&b&a&a&a&b\\
a&a&b&a&a&b&a\\
a&b&a&a&a&b&a\\
a&b&a&a&b&a&a\\
b&a&a&a&b&a&a\\
b&a&a&b&a&a&a
\end{array}
$$
\caption{Burrows-Wheeler matrix of the Christoffel word $aaabaab$}\label{BWmatrix}
\end{figure}

We may generalize this result as follows.

\begin{theorem} Let $w$ be a perfectly clustering word. Then for any two consecutive rows of its Burrows-Wheeler matrix, viewed as two words $w',w''$, one has $w'=ymx,w''=y\tilde mx$, for some words $x,y,m$ such that $xy$ is one of the palindromes of the special palindromic factorization (\ref{special}) of $w$.
\end{theorem}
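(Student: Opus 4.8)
The plan is to reduce to the Lyndon case and then read both rows off Lemma~\ref{consecutive}. Since the Burrows--Wheeler matrix depends only on the conjugation class of $w$, and every perfectly clustering word is conjugate to a unique perfectly clustering Lyndon word, there is no loss of generality in assuming that $w$ is itself a perfectly clustering Lyndon word; the special palindromic factorization (\ref{special}) to which the statement refers is then the one furnished by Theorem~\ref{char1}. Because each $\pi_i$ is a palindrome we have $W=\tilde w$, and because $w$ is a product of two palindromes it is conjugate to $\tilde w=W$. Hence the hypotheses of Lemma~\ref{consecutive} are satisfied.

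Applying Lemma~\ref{consecutive}, I obtain for two consecutive rows $w',w''$ an index $i$ and a factorization $\pi_i=uv$ with
$$w'=v\,a_i\pi_{i-1}a_{i-1}\cdots\pi_1a_1a_k\pi_{k-1}\cdots\pi_{i+1}a_{i+1}\,u,$$
$$w''=v\,a_{i+1}\pi_{i+1}\cdots\pi_{k-1}a_ka_1\pi_1\cdots\pi_{i-1}a_i\,u.$$
I would then set $y=v$ and $x=u$, so that $w'$ and $w''$ share the prefix $y$ and the suffix $x$, and $xy=uv=\pi_i$ is one of the palindromes of the factorization, exactly as required. Writing $m$ for the block of $w'$ strictly between this prefix and suffix, and $m'$ for the corresponding block of $w''$, it remains to prove $m'=\tilde m$.

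The verification of $m'=\tilde m$ is the only real content. Reversing $m$ reverses the order of its constituent blocks (the single letters $a_j$ and the words $\pi_j$) and reverses each block individually; since every letter equals its own reversal and every $\pi_j$ is a palindrome, each block is fixed, so $\tilde m$ is simply $m$ with its block sequence written in the opposite order. Carrying out this reversal turns the ``backward'' reading of the letters and palindromes in $m$ (inherited from $W=\tilde w$) into the ``forward'' reading appearing in $m'$ (inherited from $w$), and the two block sequences coincide term by term; thus $w''=y\tilde m x$. The main obstacle is purely clerical: matching these two block sequences, which succeeds precisely because each $\pi_j$ is a palindrome---without that hypothesis the reversed middle would not reproduce $m'$.
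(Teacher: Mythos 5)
Your proof is correct and follows essentially the same route as the paper: both invoke Lemma~\ref{consecutive} to write the two consecutive rows, set $y=v$, $x=u$ so that $xy=\pi_i$, and observe that the middle block of $w''$ is the reversal of that of $w'$ because every letter and every $\pi_j$ is its own reversal. Your explicit reduction to the Lyndon representative and verification of the hypothesis that $w$ is conjugate to $W$ (via Theorem~\ref{char1}) are steps the paper leaves implicit, so no gap remains.
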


See Figure \ref{BWmatrix2} for an example: the alphabet is $a<b<c$, $w={\bf a}cacac{\bf b}bb{\bf c}$ (special palindromic factorization, with palindromes $cacac$ and $bb$); rows 1 and 2 are $acac{\bf acbbb}c, acac{\bf bbbca}c$ with the reversed factor in bold, and corresponding palindrome $cacac$; rows 4 and 5 are $bb{\bf bcacacac}, bb{\bf cacacacb}$ and palindrome $bb$.

\begin{figure} 
$$
\begin{array}{cccccccccc}
a&c&a&c&a&c&b&b&b&c\\
a&c&a&c&b&b&b&c&a&c\\
a&c&b&b&b&c&a&c&a&c\\
b&b&b&c&a&c&a&c&a&c\\
b&b&c&a&c&a&c&a&c&b\\
b&c&a&c&a&c&a&c&b&b\\
c&a&c&a&c&a&c&b&b&b\\
c&a&c&a&c&b&b&b&c&a\\
c&a&c&b&b&b&c&a&c&a\\
c&b&b&b&c&a&c&a&c&a
\end{array}
$$
\caption{Burrows-Wheeler matrix of the perfectly clustering Lyndon word $acacacbbbc$}\label{BWmatrix2}
\end{figure}

\begin{proof} By Lemma \ref{consecutive}, $w'=va_i\cdots \pi_1a_1a_k\pi_{k-1}\cdots a_{i+1}u$ and $w''=va_{i+1}\cdots \pi_{k-1}a_ka_1\pi_1\cdots a_iu$, for 
some $i=1,\ldots,k-1$, with $\pi_i=uv$. Then the theorem follows by taking $y=v,x=u,m=a_i\cdots \pi_1a_1a_k\pi_{k-1}\cdots a_{i+1}$, so that $\tilde 
m=a_{i+1}\cdots \pi_{k-1}a_ka_1\pi_1\cdots a_i$, since the $\pi_j$ are palindromes.
\end{proof}

Since the arguments in the proof of Lemma \ref{pi=1} may be reversed, we obtain the following corollary.

\begin{corollary} Let $w$ be a perfectly clustering word, with its special palindromic factorization \ref{special}. Then $\pi_s$ is empty if and only if $|w|_{a_1}+\cdots+|w|_{a_s}=|w|_{a_{s+1}}+\cdots+|w|_{a_k}$.
\end{corollary}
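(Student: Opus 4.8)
The plan is to upgrade the single implication of Lemma~\ref{pi=1} into the desired equivalence, exactly as the sentence preceding the statement suggests. First I would record the translation between the two formulations. The composition $(c_1,\ldots,c_k)$ attached to the associated symmetric interval exchange is nothing but the commutative image of $w$, since $c_h$ counts the conjugates beginning with $a_h$, that is $c_h=|w|_{a_h}$; hence the displayed equality $|w|_{a_1}+\cdots+|w|_{a_s}=|w|_{a_{s+1}}+\cdots+|w|_{a_k}$ is precisely the condition $\sum_{i\le s}c_i=\sum_{i>s}c_i$ of Lemma~\ref{mhmh+1}. Because Lemma~\ref{mhmh+1} is already an equivalence ($\sigma(m_s)=m_{s+1}\iff\sum_{i>s}c_i-\sum_{i\le s}c_i=0$), the only link in the proof of Lemma~\ref{pi=1} that is not yet bidirectional is the passage ``$\pi_s$ empty $\Rightarrow\sigma(m_s)=m_{s+1}$''. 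So it suffices to prove the converse: that $\sigma(m_s)=m_{s+1}$ forces $\pi_s$ to be empty.

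To carry this out I would write the two relevant conjugates explicitly from Lemma~\ref{position}. Setting $N=a_{s+1}\pi_{s+1}\cdots a_k a_1\pi_1\cdots a_{s-1}\pi_{s-1}$ and $P=Na_s$, Lemma~\ref{position} at index $s$ gives $m_s=a_s\pi_s N$, so under the conjugator $\sigma$ (which sends $au\mapsto ua$) one finds $\sigma(m_s)=\pi_s N a_s=\pi_s P$. Applying Lemma~\ref{position} at index $s+1$ gives $m_{s+1}=N a_s\pi_s=P\pi_s$. Therefore the equality $\sigma(m_s)=m_{s+1}$ is equivalent to the commutation relation $\pi_s P=P\pi_s$; in particular $\pi_s=1$ trivially yields it, recovering Lemma~\ref{pi=1}.

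The final and only genuinely new step is to rule out a nonempty $\pi_s$ by primitivity. The factor $P=Na_s$ is nonempty, and $m_{s+1}=P\pi_s$ is a conjugate of $w$, hence primitive. If $\pi_s$ were nonempty, then the two commuting words $\pi_s$ and $P$ would both be powers of a common word $z$, so that $m_{s+1}=P\pi_s=z^{i+j}$ with $i+j\ge 2$, contradicting primitivity. Hence $\pi_s$ is empty. I expect this commutation-plus-primitivity observation to be the crux: it is the one place where the argument of Lemma~\ref{pi=1} genuinely needs to be \emph{reversed} rather than merely reread, while the remaining equivalences (the identification $c_i=|w|_{a_i}$ and Lemma~\ref{mhmh+1}) are already symmetric. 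Assembling the chain ``$\pi_s$ empty'' $\Leftrightarrow$ ``$\pi_s P=P\pi_s$'' $\Leftrightarrow$ ``$\sigma(m_s)=m_{s+1}$'' $\Leftrightarrow$ ``$\sum_{i\le s}c_i=\sum_{i>s}c_i$'', now traversed in both directions, completes the proof.
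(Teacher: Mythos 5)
Correct, and essentially the paper's own approach: the paper's proof is precisely the remark that the argument of Lemma~\ref{pi=1} may be reversed, which is what you carry out --- the identification $c_i=|w|_{a_i}$ and Lemma~\ref{mhmh+1} are already equivalences, and your commutation-plus-primitivity step supplies the one genuinely new detail (that $\sigma(m_s)=m_{s+1}$ forces $\pi_s$ to be empty) that the paper leaves implicit. As a minor simplification of that step, note that since $w$ is primitive its $n$ cyclic rotations are pairwise distinct, and $\sigma(m_s)$ is the rotation of $w$ one position past $m_s$ while $m_{s+1}$ is the rotation $1+|\pi_s|$ positions past $m_s$, so their equality already forces $|\pi_s|\equiv 0 \pmod{n}$, hence $\pi_s=1$.
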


This corollary is a first result on the palindromes appearing in factorization (\ref{special}). We intend to study these palindromes elsewhere. Note that for a two-letter alphabet, these palindromes, called {\em central words}, where first studied by Aldo de Luca. He gave, among others, the following beautiful characterization: the central words are the image of the mapping called {\em iterated palindromization} (see e.g. \cite[Chapter 12]{R}).

\end{document}